\newtheorem{thm}{Theorem}[section]
\newtheorem{lem}[thm]{Lemma}
\newtheorem{cor}[thm]{Corollary}
\theoremstyle{definition}
\newtheorem{rem}[thm]{Remark}
\newtheorem{defn}[thm]{Definition}
\numberwithin{equation}{section}
\newcommand{\NN}{\mathbb{N}}
\newcommand{\Hom}{\operatorname{Hom}}
\newcommand{\Rhom}{\mathit{R}\Hom}
\newcommand{\ZZ}{\mathbb{Z}}
\newcommand{\QQ}{\mathbb{Q}}
\newcommand{\Q}{\mathbb{Q}}
\newcommand{\CC}{\mathbb{C}}
\newcommand{\fB}{\mathbf{B}}
\newcommand{\cR}{\mathcal{R}}
\newcommand{\cone}{\operatorname{cone}}
\newcommand{\id}{\operatorname{id}}
\DeclarePairedDelimiter\abs{\lvert}{\rvert}%
\begin{document}

\title{Comparisons of Lie algebra cohomologies of $(\varphi,\Gamma)$-modules}


\author{Rustam Steingart}
\address{Rustam Steingart\\
	Ruprecht-Karls-Universität Heidelberg,
	Mathematisches Institut \\
	Im Neuenheimer Feld 205 \\
	69120 Heidelberg\\
	Germany}
\email{rsteingart@mathi.uni-heidelberg.de}

\subjclass[2020]{11F80, 11S25, 22E35}
\keywords{$(\varphi,\Gamma)$-modules, Galois cohomology, analytic cohomology}

\thanks{
	I would like to thank Muhammad Manji for pointing out the spectral sequences considered in section 3 to me and discussing related questions, which ultimately motivated me to write this article. I would also like to thank Gautier Ponsinet and Otmar Venjakob for valuable discussions and their comments on earlier drafts of this article. 
	This research was funded by the Deutsche Forschungsgemeinschaft (DFG, German Research Foundation) TRR 326 \textit{Geometry and Arithmetic of Uniformized Structures}, project number 444845124.}

\maketitle

\begin{abstract}
We generalise a result of Fourquaux and Xie thereby completely determining the relationship between $\QQ_p$-analytic and $L$-analytic Lie algebra cohomology of analytic $(\varphi_L,\Gamma_L)$-modules. We use the results to conclude that for $L\neq \QQ_p,$ there exist examples of étale $(\varphi_L,\Gamma_L)$-modules over Robba rings whose $\QQ_p$-analytic cohomology does not arise as a base change of Galois cohomology. 
\end{abstract}

\bigskip
	\section{Introduction}
Let $L/\mathbb{Q}_p$ be a finite extension of degree $d$ and $\varphi_L \in o_L \llbracket T\rrbracket$ a Frobenius power series for a uniformiser $\pi_L$ of $L.$ We denote by $L_\infty$ the Lubin-Tate extension attached to $\varphi_L$ and let $\Gamma_L:= \operatorname{Gal}(L_\infty/L).$ Contrary to the case $L=\QQ_p$ there exist Galois representations, which are not overconvergent and hence the categories of étale $(\varphi_L,\Gamma_L)$-modules over the Robba ring $\cR_L$ with coefficients in $L$ and the category of continuous $L$-linear representations are not equivalent. By a theorem of Berger there is an equivalence between $L$-analytic representations which are representations $V$ with the property that $\CC_p \otimes_{L,\sigma}V$ is the trivial $\CC_p$-semilinear representation for every $\sigma \neq \id$ and $L$-analytic étale $(\varphi_L,\Gamma_L)$-modules, i.e., étale $(\varphi_L,\Gamma_L)$-modules $M$ over $\cR_L$ such that the derived action $\operatorname{Lie}(\Gamma_L) \times M \to M $ is $L$-bilinear (cf.\cite{Berger2016}). The Lie algebra of $\Gamma_L$ is one dimensional over $L$ but $d$-dimensional over $\QQ_p.$ The $L$-bilinearity condition leads to a system of differential equations on $M$ which can be reinterpreted as $M$ being a torsion module over the universal enveloping algebra $U_{\QQ_p}(\mathfrak{g}_0)$ of the restriction of scalars $\mathfrak{g}_0$ of the Lie algebra $\mathfrak{g}$ of $\Gamma_L\footnote{For technical reasons we require a base change to a Galois closure $E$ of $L$ but we gloss over this in the introduction.}.$ This torsion relation will be used to relate Lie cohomology over $L$ and $\QQ_p,$ respectively. By definition the $L$-Lie algebra cohomology of $M$ is $\Rhom_{U(\mathfrak{g})}(L,M)$ and is related to analytic cohomology via a spectral sequence (cf. \cite{Kohlhaase}). 
In the context of $(\varphi_L,\Gamma_L)$-modules one usually studies cohomology theories of the form $$C_{f,?}(M):=\operatorname{cone}(C_?^\bullet(\Gamma_L,M) \xrightarrow{f-\id}C_?^\bullet(\Gamma_L,M))[-1],$$ where $f$ is an operator, which commutes with $\Gamma_L$ (usually $f = \varphi_L$ or the left-inverse $\Psi$) and $C^\bullet_?$ is a complex computing ``some'' $\Gamma_L$-cohomology with values in $M$ (e.g. $? \in \{\text{cts},\QQ_p\text{-an},L\text{-an},\QQ_p\text{-Lie},L\text{-Lie}$\}).
We will show the following theorem:
\begin{thm} Let $K$ be a field containing a Galois closure of $L,$
	let $M$ be an $L$-analytic $(\varphi_L,\Gamma_L)$-module over $\cR_K,$ and let $f \in \operatorname{End}_L(M)$ be an operator that commutes with the action of $\mathfrak{g}_0$ on $M.$ Then
	
	$$C^{\bullet}_{f,\mathbb{Q}_p\text{-Lie}}(M)  \simeq \bigoplus_{n=0}^{d-1} C^{\bullet}_{f,L\text{-Lie}}(M)[-n]^{\binom{d-1}{n}}.$$
	
\end{thm}

\begin{cor} 
	We have an isomorphism of $L$-vector spaces
	$$H^i_{cts}(M) \cong H^0_{an}(M)^{\binom{d-1}{i}} \oplus H^1_{an}(M)^{\binom{d-1}{i-1}}  \oplus H^2_{an}(M)^{\binom{d-1}{i-2}},$$
	for every $i \geq 0.$ In particular: \\
	$H^1_{cts}(M) \cong H^1_{an}(M) \oplus H^0_{an}(M)^{d-1}$ and $H^{d+1}_{cts}(M) \cong H^2_{an}(M).$
	
\end{cor}	
This completely determines the relationship between analytic and continuous cohomology of analytic $(\varphi_L,\Gamma_L)$-modules and generalises the results of Fourquaux-Xie, who determined the relationship of $H^0$ and $H^1$ in \cite{FX12}. Our result follows from a combinatorial argument and hence greatly simplifies their original proof. 
Furthermore, the above theorem can be generalised to $(\varphi_L,\Gamma_L)$-modules which have trivial ``Sen operator'' $\nabla_\sigma$ for a prescribed set of embeddings $\sigma \colon L \to \CC_p.$
In some cases the cohomology groups of $\Rhom_{L[\Gamma_L]}(L,C_{f,\mathfrak{g}}(M))$ are known to be finite-dimensional over the base field $K.$ In these cases we can relate the Euler-characterstics of the $\mathfrak{g}$ and $\mathfrak{g}_0$ variant to conclude that the Euler characteristic of the $\mathfrak{g}_0$-variant is zero if $d>1.$ This result is slightly surprising considering the relationship between cohomology of $(\varphi_L,\Gamma_L)$-modules and Galois cohomology. 
On the one hand, in the classical case $L=\QQ_p,$ the Galois cohomology of a representation is isomorphic to the continuous cohomology of its $(\varphi,\Gamma)$-module over the Robba ring (cf. \cite{liu2007cohomology}) and on the other hand,
working with a different coefficient ring, the Galois cohomology of an $L$-linear representation and the continuous cohomology of its Lubin-Tate $(\varphi_L,\Gamma_L)$-module are isomorphic (cf. \cite{kupferer2022herr}).   
Using an abstract formalism for Herr complexes allows us to define cup products in terms of Yoneda pairings.
If one considers an analytic $(\varphi_L,\Gamma_L)$-module which is generic, in the sense that $H^0_{an}(M)=H^2_{an}(M)=0,$ our results assert that $H^2_{cts}(M)\cong (H^1_{an}(M))^{d-1}.$ This is no coincidence and a source of non-trivial classes in $H^2_{cts}(M)$ is obtained by forming cup-products with non-analytic classes in $H^1_{cts}(L)$. If $H^1_{an}(M)$ is of rank one we even show that this is the only source of such classes.
\begin{thm}
	Let $M$ be an $L$-analytic $(\varphi_L,\Gamma_L)$-module with the property $H^0_{an}(M)=H^2_{an}(M)=0.$ 
	\begin{enumerate}
		\item The cup product defines a non-degenerate bilinear map $$H^1_{an}(M) \times H^1_{cts}(L)/H^1_{an}(L) \to H^2_{cts}(M).$$
		\item  If in addition $H^1_{an}(M)$ is one-dimensional over $L$, then the cup product induces an isomorphism $$H^1_{an}(M) \otimes_L H^1_{cts}(L)/H^1_{an}(L) \cong H^2_{cts}(M).$$
	\end{enumerate}
\end{thm}
In the last section we take a different perspective on the problem of comparing analytic and continuous cohomology. Instead of trying to compare the analytic cohomology of an $L$-analytic $(\varphi_L,\Gamma_L)$-module $M$ with the continuous cohomology of $M$ itself, we will compare the analytic cohomology to a suitable cohomology of a bigger object. For technical reasons we will be working with $(\varphi_L,\Gamma_L)$-modules over the analytic vectors $B:= (\tilde{\fB}^I_L)^{la}$ of certain period rings. 
Given a $(\varphi_L,\Gamma_L)$-module $N$ over $B$ we can find its $L$-analytic vectors $M:=\operatorname{Sol}(N)$ as solutions to an analogue of the Cauchy-Riemann differential equations (note that $M$ is a $(\varphi_L,\Gamma_L)$-module over the $L$-analytic vectors of $B$ rather than the ring $\cR_L$ considered before). Inspired by the Frölicher spectral sequence from complex analysis we prove an analogue in this case.
To this end we define a complex $C_{\Omega^{\Sigma_0}}^\bullet(N),$ analogous to the Dolbeaut complex, whose $0$-th cohomology is $M$ and another complex $C_{\Omega^{\Sigma}}^\bullet(N),$ which is obtained from the preceding by taking the $\nabla_{\id}$-cone.  If $N$ is attached to an $L$-analytic Galois representation we have the following result.

\begin{thm} 
	Let $V\in \operatorname{Rep}_{L}(G_L)$ be $L$-analytic, let $I=[r,s]\subset (0,1]$ be a closed subinterval with $r$ large enough. Set $$N^I:= (\tilde{D}^I(V))^{la}:= ((\tilde{\fB}^I\otimes V)^{H_L})^{la}$$ and $M^I:= \operatorname{Sol}(N^I).$
	Then the natural map $M^I[0] \to C_{\Omega^{\Sigma_0}}^\bullet(N^I)$ is an equivalence and $$H^i_{\Omega^{\Sigma}}(N^I) \cong H^i_{\nabla_1}(M^I)$$
	$$H^i_{\varphi_L,\Omega^{\Sigma}}(N^I) \cong H^i_{\varphi_L,\nabla_1}(M^I).$$
\end{thm}
	
\section{Preliminaries on $(\varphi_L,\Gamma_L)$-modules and cohomology}
\subsection{$(\varphi_L,\Gamma_L)$-modules}
Let $L/\QQ_p$ be a finite extension. We fix a uniformiser $\pi_L$ of $o_L$ and a Frobenius power series $\varphi_L$ for $\pi_L.$ We denote by $\Gamma_L:= \operatorname{Gal}(L_\infty/L)$ the Galois group of the corresponding Lubin-Tate extension. For a complete field extension $K\subset \CC_p$ of $L$ and a closed subinterval $I =[r,s] \subset [0,1]$ with $r,s \in \abs{\CC_p},$ we denote by $\cR_K^I$ the ring of Laurent series (power series if $r=0$) converging on the annulus $ r \leq \abs{T} \leq s.$ We endow these rings with their natural Banach topology which turns $\cR_K^{[r,s)}:= \varprojlim_{r<s'<s} \cR_K^{[r,s']}$ into a Fréchet space. The ring $\cR_K:= \varinjlim_{0<r<1} \cR_K^{[r,1)}$ is called the Robba ring with coefficients in $K$ and we endow it with the LF topology. The action of $\Gamma_L$ on $o_L\llbracket T \rrbracket$ extends to these rings and if $[r^{1/q},s^{1/q}]$ does not contain any zeroes of $\varphi_L,$ then we can extend the endomorphism $\varphi_L$ of $o_L\llbracket T\rrbracket$ to a map $\varphi_L \colon \cR_K^{[r,s]} \to \cR_K^{[r^{1/q},s^{1/q}]}.$ This is the case if $r > \abs{\pi_L}^{\frac{q}{q-1}}.$ In particular $\varphi_L$ extends to a $K$-linear endomorphism of $\cR_K.$ A $(\varphi_L,\Gamma_L)$-module over $\cR_K$ is a finite free $\cR_K$-module $M$ together with a continuous semi linear action of $\Gamma_L$ and a $\varphi_L$-semi linear endomorphism $\varphi_M$ which commutes with the action of $\Gamma_L$ such that the linearised map $$\varphi_M^{lin} \colon \cR_K \otimes_{\cR_K,\varphi_L}M \to M$$ is an isomorphism. 
One can show that such an $M$ arises as base change of a module $M_0$ defined over some half-open interval $[r_0,1)$ and that the matrix $A$ of $\varphi_{M_0} \colon M_0 \to \cR_K^{[r_0^{1/q},1)}\otimes_{\cR_K^{[r_0,1)}} M_0$ converges at the boundary $1$ and the valuation $v_p(\operatorname{det}(A))$ (at $T=1$) is called degree of $M$ (cf. \cite[Section 3.3]{Berger}). The above notion of degree together with the obvious notion of rank provides us with a theory of Harder-Narasimhan slopes for $(\varphi_L,\Gamma_L)$-modules. A $(\varphi_L,\Gamma_L)$-module is called étale if it is semi-stable of slope zero. The action of $\Gamma_L$ on $M$ induces an action of $\operatorname{Lie}(\Gamma_L)$ on $M.$ We say that $M$ is $L$-analytic if this action is $L$-bilinear. A continuous $L$-linear representation $V$ of the absolute Galois group $G_L$ of $L$ is called analytic if the $\CC_p$-semilinear representation $\CC_p \otimes_{L,\sigma}V$ is trivial for all $ \id \neq \sigma \in \Sigma:= \operatorname{Hom}_L(L,\CC_p).$ 
We recall that $o_L\llbracket T\rrbracket$ can be $(\varphi_L,\Gamma_L)$-equivariantly embedded into the ring of ramified Witt vectors $W(\widehat{L}_\infty^\flat)_L$ over the tilt $\widehat{L}_\infty^\flat$ of $\widehat{L}_\infty$ and one can define an analogue of the Robba ring on this level which we denote by $\tilde{\cR}_L$ (sometimes denoted $\tilde{\fB}^{\dagger}_{rig,L}$ in the literature). 

\begin{thm}
	\begin{enumerate}[(i)] We have the following equivalences of categories: 
		\item The category of étale $(\varphi_L,\Gamma_L)$-modules over $\tilde{\cR}_L$ is equivalent to the category of continuous $L$-linear representations of $G_L.$
		\item 	The category of étale $(\varphi_L,\Gamma_L)$-modules over $\cR_L$ is equivalent to the category of over-convergent $L$-linear representations of $G_L.$
		\item The category of étale $L$-analytic  $(\varphi_L,\Gamma_L)$-modules over $\cR_L$ is equivalent to the category of analytic $L$-linear representations of $G_L.$
	\end{enumerate}
	
\end{thm}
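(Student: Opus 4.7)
The three parts are classical results in the Lubin-Tate theory; my plan is to deduce them from a small number of core tools rather than reprove everything from scratch. First I would set up the comparison between the imperfect coefficient rings $\fB_L,\fB^{\dagger}_L,\cR_L$ and their perfect counterparts $\tilde\fB_L,\tilde\fB^{\dagger}_L,\tilde\cR_L$, and note that in all three settings the base changes to the perfect rings are fully faithful on étale objects (once one checks that scalars are extended along faithfully flat or at least $\varphi$-equivariantly conservative maps).

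For \emph{(i)}, I would run the Kisin--Ren strategy. Starting from an étale $(\varphi_L,\Gamma_L)$-module $M$ over $\tilde\cR_L$, the first step is Kedlaya's slope filtration theorem: being étale (slope zero and semistable) forces $M$ to descend to a $\varphi_L$-module $M^+$ over $\tilde\fB_L$, i.e.\ $M=\tilde\cR_L\otimes_{\tilde\fB_L}M^+$, with the $\Gamma_L$-action descending as well. Then Fontaine's classical dictionary, in the Lubin-Tate form, identifies étale $\varphi_L$-modules over $\tilde\fB_L$ with $L$-linear continuous $G_L$-representations via $V\mapsto (\tilde\fB_L\otimes_L V)^{\varphi_L=1}$ and the inverse functor $M^+\mapsto (\tilde\fB_L\otimes_{\tilde\fB_L} M^+)^{\varphi_L=1}$, using that $(\tilde\fB_L)^{\varphi_L=1}=L$.

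For \emph{(ii)} the formal scheme is the same but with ``overconvergent'' inserted in the appropriate place. One introduces $\fB^{\dagger}_L\subset \cR_L$; the étale condition on $M$ over $\cR_L$ again forces descent to a $\varphi_L$-module over $\fB^{\dagger}_L$ (this is essentially the overconvergent part of Kedlaya's theorem together with the fact that étaleness pins down the lattice). Associating to $V$ the module $D^{\dagger}(V):=(\fB^{\dagger}\otimes V)^{H_L}$ and conversely recovering $V$ as $(\tilde\fB\otimes_{\fB^{\dagger}_L}M)^{\varphi_L=1}$ yields quasi-inverse functors between étale $(\varphi_L,\Gamma_L)$-modules over $\fB^{\dagger}_L$ and overconvergent representations by definition of overconvergence, and base change to $\cR_L$ is fully faithful on the essential image. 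Here the main technical obstacle is that, unlike in the cyclotomic case, not every $V$ is overconvergent, so one has to be careful to keep the overconvergence hypothesis throughout and cite Berger's criterion for when descent to $\fB^{\dagger}_L$ exists.

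For \emph{(iii)}, I would start from (ii) to realise an étale $L$-analytic $(\varphi_L,\Gamma_L)$-module $M$ over $\cR_L$ as coming from an overconvergent representation $V$. The heart of the argument is then Berger's computation from \cite{Berger2016}: the action of $\operatorname{Lie}(\Gamma_L)$ on $M$, extended $\cR_L$-linearly, decomposes after base change to $\CC_p$ according to the embeddings $\sigma\in\Sigma$, giving rise to Sen-type operators $\nabla_\sigma$; the $L$-analyticity of $M$ is precisely the vanishing of $\nabla_\sigma$ for $\sigma\neq\id$, which by the Sen--Fontaine dictionary corresponds to $\CC_p\otimes_{L,\sigma}V$ being trivial for those $\sigma$, i.e.\ to $V$ being $L$-analytic. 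The hardest input here, and the step I expect to require the most care, is identifying the intrinsic $\cR_L$-linear differential operators on $M$ with the Sen operators on $V$; everything else is formal once the previous parts and Berger's theorem are in place.
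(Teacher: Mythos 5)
The paper itself disposes of this theorem by citation -- (i) is \cite[Proposition 6.3]{SV23comparing}, (ii) is obtained by combining \cite[Propositions 1.5 and 1.6]{FX12}, and (iii) is \cite[Theorem D]{Berger2016} -- so any attempt to reprove the statements is already a different route; but your sketch has a genuine gap in part (iii). What you outline there is only the ``easy'' compatibility of conditions: starting from an \'etale module $M$ over $\cR_L$, you use (ii) to produce an overconvergent $V$ and then match $L$-analyticity of $M$ (vanishing of $\nabla_\sigma$, $\sigma\neq\id$) with triviality of $\CC_p\otimes_{L,\sigma}V$ via Sen theory. That gives full faithfulness and identifies the essential image inside the \emph{overconvergent} analytic representations. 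The actual content of Berger's Theorem D is essential surjectivity: every $L$-analytic representation is overconvergent, which is precisely what fails for general representations when $L\neq\QQ_p$ and therefore cannot be deduced from (ii) plus a Sen-operator comparison. Berger's proof of this step is the multivariable monodromy descent over the rings $(\tilde{\fB}^I_L)^{la}$ -- the $\partial_\sigma$-operators, the solution functor $\operatorname{Sol}$, and the descent of pro-analytic vectors recalled in Theorem \ref{thm:monodromy} of this paper -- and none of that machinery appears in your plan, so as written (iii) is not proved.

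Two smaller points. In (i) the Fontaine-style functors are garbled: the functor from representations to modules is $V\mapsto(\tilde{\fB}\otimes_L V)^{H_L}$ (Galois invariants of the big ring with its full $G_L$-action), while $(\tilde{\fB}\otimes_{\tilde{\fB}_L}M)^{\varphi_L=1}$ recovers $V$; what you wrote, $V\mapsto(\tilde{\fB}_L\otimes_L V)^{\varphi_L=1}$, is neither. Also the slope-theoretic descent from $\tilde{\cR}_L$ lands in the bounded overconvergent subring $\tilde{\fB}^{\dagger}_L$, not in $\tilde{\fB}_L$ directly, and one still needs the (perfect-ring) fact that \emph{every} continuous representation is overconvergent with respect to $\tilde{\fB}^{\dagger}$ -- this is where the perfect and imperfect theories genuinely differ, and it is the reason (i) holds for all representations while (ii) requires the overconvergence hypothesis.
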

\begin{proof} The first equivalence is \cite[Proposition 6.3]{SV23comparing}.
	The second equivalence follows by combining \cite[Proposition 1.5 and Proposition 1.6]{FX12}. The third equivalence is \cite[Theorem D]{Berger2016}.
\end{proof}
\subsection{Cohomology}
Let $V$ be a complete LF space over a complete field $K \subset \CC_p$ with a pro $L$-analytic action of a compact $L$-analytic group $G$ which contains an open normal subgroup $U$ isomorphic to $o_L.$
By \cite[Proof of 4.3.10]{SchneiderVenjakobRegulator} the action of $U$ extends to a separately continous action of the algebra $D^{\QQ_p\text{-an}}(U,K)$ of $\QQ_p$-analytic distributions which factors over the algebra $D^{L\text{-an}}(U,K)$ of $L$-analytic distributions. By Amice' theorem $D^{\QQ_p\text{-an}}(U,K)$ is isomorphic to the ring of convergent power series on a $d$-dimensional polydisc. More explicitly, any $D^{\QQ_p\text{-an}}(U,K)$ can be expanded in a convergent power series in the variables $(\delta_{\gamma_1}-1,\dots, \delta_{\gamma_d}-1)$ where $\gamma_1,\dots,\gamma_d$ is a $\ZZ_p$-basis of $U$ and $\delta_\gamma$ denotes the dirac distribution corresponding to $\gamma \in U.$ Then the trivial $D^{\QQ_p\text{-an}}(U,K)$-module $F$ admits a projective resolution $P^\bullet$ given by the Koszul complex for $(\delta_{\gamma_1}-1,\dots, \delta_{\gamma_d}-1).$ 
Since $V$ being a complete locally convex space is complete and linearly topologised, the complex $\operatorname{Hom}_{D^{\QQ_p\text{-an}}(U,K)}(P^\bullet,V)$ computes continuous group cohomology of $U$ with values in $V$ by \cite[V,1.2.6]{lazard1965groupes}.
In  particular $$\operatorname{Ext}_{D^{\QQ_p\text{-an}}(U,K)}^i(K,V) \cong H^i_{cts}(U,V).$$ By variants of Shapiro's Lemma on both sides we also have
$$\operatorname{Ext}_{D^{\QQ_p\text{-an}}(G,K)}^i(K,V) \cong H^i_{cts}(G,V).$$
For the purpose of this article we define $L$-analytic cohomology as $$H^i_{an}(G,V):= \operatorname{Ext}^i_{D^{L\text{-an}}(G,K)}(K,V).$$  
We denote by $\mathfrak{g}$ (resp. $\mathfrak{g}_0$) the $L$-Lie algebra of $G$ (resp. the $\QQ_p$-Lie algebra of the restriction of scalars of $G$ to $\QQ_p$). Recall that the cohomology of a Lie algebra $\mathfrak{h}$ over a field $K$ with values in a $\mathfrak{h}$-module $V$ is given by the groups $\operatorname{Ext}^i_{U(\mathfrak{h})}(K,V).$ There are spectral sequences comparing $L$-Lie algebra cohomology with $L$-analytic cohomology (resp. $\QQ_p$-Lie algebra cohomology with continuous cohomology). One has the following 
\begin{thm} \label{thm:LieundAN}
	$$H^q_{an}(G,V) \cong H^q(\mathfrak{g},V)^G$$ and $$H^q_{cts}(G,V) \cong H^q(\mathfrak{g_0},V)^G.$$
\end{thm}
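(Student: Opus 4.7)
The plan is to handle both isomorphisms in parallel, as they follow the same pattern: reduce to a pro-$p$ open subgroup $U \cong o_L$ of $G$ via a Hochschild-Serre argument, and then identify distribution-algebra cohomology with Lie-algebra cohomology by passing from a Koszul resolution on group-like generators to one on Lie-algebra generators.

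First I would pick an open normal subgroup $U \triangleleft G$ with $U \cong o_L$. Since $G/U$ is finite and $K$ has characteristic zero, a Hochschild-Serre spectral sequence for the inclusion $D^?(U,K) \hookrightarrow D^?(G,K)$, where $?$ stands for either $\QQ_p$-an or $L$-an, degenerates at $E_2$ to yield $H^q_?(G,V) \cong H^q_?(U,V)^{G/U}$, and likewise $H^q_{cts}(G,V) \cong H^q_{cts}(U,V)^{G/U}$.

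Next, focusing on the continuous/$\QQ_p$-analytic side, the discussion preceding the theorem shows that the Koszul complex $P^\bullet$ on the sequence $(\delta_{\gamma_1}-1, \dots, \delta_{\gamma_d}-1)$ is a free resolution of the trivial module $K$ over $D^{\QQ_p\text{-an}}(U,K)$. I would replace $P^\bullet$ by the Koszul complex $Q^\bullet$ on the Lie-algebra elements $X_i := \log(\delta_{\gamma_i}) \in D^{\QQ_p\text{-an}}(U,K)$. The identity $\delta_{\gamma_i} - 1 = X_i \cdot u_i$, with $u_i = (e^{X_i}-1)/X_i$ a unit of constant term $1$ in the distribution algebra, shows that the two sequences generate the same augmentation ideal and produces an isomorphism $Q^\bullet \simeq P^\bullet$ of Koszul resolutions; hence $Q^\bullet$ is also a free resolution of $K$. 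Applying $\Hom_{D^{\QQ_p\text{-an}}(U,K)}(-,V)$ to $Q^\bullet$ recovers exactly the Chevalley-Eilenberg complex for $\mathfrak{g}_0$ acting on $V$, since each $X_i$ acts on a $\QQ_p$-analytic vector as the corresponding derivation; this gives $H^q_{cts}(U,V) \cong H^q(\mathfrak{g}_0,V)$. The same argument, replacing $D^{\QQ_p\text{-an}}(U,K)$ by $D^{L\text{-an}}(U,K)$ and the $\ZZ_p$-basis of $U$ by an $L$-analytic generator, yields $H^q_{L\text{-an}}(U,V) \cong H^q(\mathfrak{g},V)$.

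To conclude, I would observe that $U$, being connected and abelian, acts trivially on $\mathfrak{g}$ (and on $\mathfrak{g}_0$) by the adjoint representation, and its induced action on $H^q(\mathfrak{g},V)$ coming from the action on $V$ is likewise trivial by a standard argument at the level of the Chevalley-Eilenberg complex. Hence $H^q(\mathfrak{g},V)^{G/U} = H^q(\mathfrak{g},V)^G$ and similarly for $\mathfrak{g}_0$, completing the proof. The main technical obstacles I expect are verifying that $(X_1,\dots,X_d)$ is a regular sequence in $D^?(U,K)$ so that its Koszul complex really is a resolution, and setting up the Hochschild-Serre spectral sequence precisely in this mixed locally-analytic/continuous setting; the remaining steps are essentially formal.
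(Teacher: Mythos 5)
Your Hochschild--Serre reduction to an open subgroup $U\cong o_L$ is harmless, but the central step fails. The element $u_i=(e^{X_i}-1)/X_i$ is not a unit of $D^{\QQ_p\text{-an}}(U,K)$; in fact it does not even lie in this algebra. Under the Amice isomorphism $D^{\QQ_p\text{-an}}(\ZZ_p,K)\cong\mathcal{O}(\mathbf{B})$ (rigid functions on the open unit disc) one has $\delta_{\gamma}-1=T$ and $X=\log(\delta_\gamma)=\log(1+T)$, which vanishes at every point $\zeta-1$ with $\zeta\in\mu_{p^\infty}$, whereas $T$ vanishes only at $0$. Hence $X=(\delta_\gamma-1)\cdot v$ with $v=\log(1+T)/T$ a non-unit, $T/\log(1+T)$ has poles and is not in the algebra, and the ideal $(X_1,\dots,X_d)$ is \emph{strictly} contained in the augmentation ideal. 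Consequently $Q^\bullet$ is not isomorphic to $P^\bullet$: it resolves $D/(X_1,\dots,X_d)$, the functions on the infinite set of torsion points of the character variety, not $K$. The conclusion you extract from it, $H^q_{cts}(U,V)\cong H^q(\mathfrak{g}_0,V)$ \emph{without} invariants, is in fact false: for $V$ a nontrivial locally constant character of $U$ one has $H^0_{cts}(U,V)=V^U=0$ while $H^0(\mathfrak{g}_0,V)=V$, since the derived action vanishes. The same example refutes your closing claim that $U$ acts trivially on $H^q(\mathfrak{g},V)$: the Lie algebra only sees the infinitesimal part of the action, and smooth actions are invisible to it. This is exactly why the theorem carries $G$-invariants (not merely $G/U$-invariants), already at the level of $U$ itself, where the correct statement is $H^q_{?}(U,V)\cong H^q(\mathfrak{g},V)^U$.

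The gap is not a technicality one can patch by checking regularity of the sequence $(X_1,\dots,X_d)$; the passage from distribution-algebra cohomology to Lie algebra cohomology plus group invariants is the genuinely nontrivial content here. The paper does not reprove it: it invokes Kohlhaase's comparison theorem (Theorem 4.10 of \cite{Kohlhaase}), applicable to the solvable group $G$ by Theorem 6.5 of loc.\ cit., together with the observation in \cite{MSVW} that spherical completeness of $K$ may be dropped; the continuous case is of Lazard type. If you want a self-contained argument in this abelian situation, you must work with the honest resolution $P^\bullet$ on the elements $\delta_{\gamma_i}-1$ and explain how the $U$-invariants of Chevalley--Eilenberg cohomology emerge (e.g.\ through the structure of $\mathcal{O}(\mathbf{B})$-modules supported on torsion points, or a limit over the subgroups $U_n$), rather than by rescaling Koszul generators by purported units.
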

\begin{proof}
	Apply \cite[Theorem 4.10]{Kohlhaase}. Note that the theorem is applicable to our (solvable) group $G$ by \cite[Theorem 6.5]{Kohlhaase}. The assumption that $K$ is spherically complete is not required (cf. \cite[Section 4]{MSVW} for an elaboration).
\end{proof}

The situations above have a thing in common. In every case there is an augmented $K$-algebra $D$ such that the cohomology groups are given by $\operatorname{Ext}^i_D(K,M)$ where one takes $D = D^{\QQ_p\text{-an}}(G,K), D^{L\text{-an}}(G,K), U_K(\mathfrak{g})$ or $U_K(\mathfrak{g}_0).$
\begin{defn}
	Let $M$ be a $(\varphi_L,\Gamma_L)$-module over $\cR_K.$  We define 
	the continuous cohomology $C^\bullet_{cts}(M)$ as $$\operatorname{cone}(\Rhom_{D^{\QQ_p\text{-an}}(\Gamma_L,K)}(K,M) \xrightarrow{\varphi_L-1} \Rhom_{D^{\QQ_p\text{-an}}(\Gamma_L,K)}(K,M))[-1]$$ and
	the Lie-algebra cohomology $C^\bullet_{Lie}(M)$ as  $$\operatorname{cone}(\Rhom_{U_K(\operatorname{Lie}(\Gamma_L))_0)}(K,M) \xrightarrow{\varphi_L-1} \Rhom_{U_K((\operatorname{Lie}(\Gamma_L))_0)}(K,M))[-1].$$
	If $M$ is $L$-analytic we define in addition analytic cohomology 
	$C^\bullet_{an}(M)$ as $$\operatorname{cone}(\Rhom_{D^{L\text{-an}}(\Gamma_L,K)}(K,M) \xrightarrow{\varphi_L-1} \Rhom_{D^{L\text{-an}}(\Gamma_L,K)}(K,M))[-1]$$
	and $L$-Lie algebra cohomology $C^\bullet_{L-Lie}(M)$ as $$ \operatorname{cone}(\Rhom_{U_K(\operatorname{Lie}(\Gamma_L))}(K,M) \xrightarrow{\varphi_L-1} \Rhom_{U_K(\operatorname{Lie}(\Gamma_L))}(K,M))[-1].$$
\end{defn}

\section{Preliminary results on Koszul complexes}
We will need some combinatorics. 
\begin{defn} \label{def:combo}Define a sequence of finite sequences $(y_i)_{i \in \NN_0}$ by setting $y_0 = 0$ and $y_{i+1} = \widehat{y_{i}}*y_i$ for $i >0,$ where $*$ denotes concatenation of sequences and $\widehat{y_i}$ is the sequence obtained by increasing each entry of $y_i$ by $1.$ For $k \in \NN_0$ denote by $N(k,n)$ the number of occurrences of $k$ in the sequence $y_n.$
\end{defn} The sequence above starts with: $$(0), (1,0),(2,1,1,0), (3,2,2,1,2,1,1,0), \dots.$$
\begin{rem} \label{rem:combinatorics} The following hold:
	\begin{enumerate}[(i)]
		\item $N(0,i)=N(i,i) = 1$ for every $i \in \NN_0.$
		\item $N(1,i) = i$ for every $i \in \NN_0.$
		\item $N(k,n+1) = N(k-1,n) + N(k,n)$ for $k>0$ and every $n \in \NN_0.$
		\item $N(k,n)  = \binom{n}{k}.$
	\end{enumerate}
	\begin{proof}
		The points {\it{(i)}} and  {\it(iii)} follow from the definition and {\it{(ii)}} follows by combining {\it{(i)}} and {\it{(iii)}}.
		Point {\it{(iv)}} follows from from {\it{(i),(ii),(iii)}} by recurrence using the formula for binomial coefficients $$\binom{n}{k-1} + \binom{n}{k} = \binom{n+1}{k}.$$ 
	\end{proof}
\end{rem}

\begin{defn}	Let $R$ be a commutative ring, $x_0,\dots,x_n \in R$ and let $M$ be an $R$ module. We define the Koszul complex inductively as $K_\bullet(x_0,M):= \operatorname{cone}(M \xrightarrow{x_0}M)$ and $$K_\bullet(x_0,\dots,x_{n},M) := \operatorname{cone}\left(K_\bullet(x_0,\dots,x_{n-1},M) \xrightarrow{x_{n}}K_\bullet(x_0,\dots,x_{n-1},M)\right)$$ for $1 < n \leq d.$
	We define the cohomological Koszul complex  as $$K^\bullet(x_0,\dots,x_n,M):= \operatorname{Hom}(K_\bullet(x_0,\dots,x_{n},R),M).$$
\end{defn}

\begin{lem} \label{lem:koszulduality}
	Let $x_1,\dots,x_l \in R$ be a regular sequence. Then 
	\begin{enumerate}[(i)]
		\item $K_\bullet(x_1,\dots,x_l,R)$ is a projective resolution of $R/(x_1,\dots,x_l)$ as an $R$-module.
		\item There is a canonical isomorphism $$K^\bullet(x_1,\dots,x_l,M) \simeq K_\bullet(x_1,\dots,x_l,M)[-l]$$ for every $R$-module $M$.
		\item There is a canonical isomorphism $$\Rhom_R(R/(x_1,\dots,x_l),C) \simeq R/(x_1,\dots,x_l) \otimes^{\mathbb{L}}_R C[-l]$$ for every $C \in \mathbf{D}(R).$
	\end{enumerate}
\end{lem}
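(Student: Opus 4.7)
My plan is to handle (i), (ii), (iii) in order, with (iii) essentially a formal consequence of the first two.

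For (i), I would induct on $l$. For $l=1$, regularity of $x_1$ forces multiplication by $x_1$ on $R$ to be injective, so $K_\bullet(x_1,R) = [R \xrightarrow{x_1} R]$ has $H_0 = R/(x_1)$ and $H_i = 0$ for $i > 0$; both terms are free, so this is indeed a projective resolution. For the inductive step, write $\bar R := R/(x_1,\ldots,x_{l-1})$ and assume $K_\bullet(x_1,\ldots,x_{l-1},R)$ resolves $\bar R$. The defining cone gives a distinguished triangle
$$K_\bullet(x_1,\ldots,x_{l-1},R) \xrightarrow{x_l} K_\bullet(x_1,\ldots,x_{l-1},R) \to K_\bullet(x_1,\ldots,x_l,R) \to [+1],$$
and the associated long exact sequence of homology reduces, by the induction hypothesis, to the sequence $0 \to \bar R \xrightarrow{x_l} \bar R \to H_0(K_\bullet(x_1,\ldots,x_l,R)) \to 0$ together with the vanishing of all higher homology. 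Regularity of $x_l$ on $\bar R$ yields $H_0 = R/(x_1,\ldots,x_l)$ and the claim. The terms remain free since $\operatorname{cone}$ of a map of bounded free complexes is again bounded free.

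For (ii), the cleanest route is to unwind the inductive cone definition and identify $K_\bullet(x_1,\ldots,x_l,R)$ with the exterior-algebra complex $\bigwedge^\bullet R^l$ whose differential is contraction against the vector $(x_1,\ldots,x_l)$. Applying $\operatorname{Hom}_R(-,M)$ and using the perfect pairing $\bigwedge^i R^l \otimes \bigwedge^{l-i} R^l \to \bigwedge^l R^l \cong R$ produces a canonical isomorphism $K^\bullet(x_1,\ldots,x_l,M) \cong K_\bullet(x_1,\ldots,x_l,M)[-l]$, the shift accounting for the reversal of degrees. The main technical nuisance is checking that under this identification the transposed differential agrees with the contraction differential up to the expected sign, which is a standard exterior-algebra computation that I would record but not carry out in detail.

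For (iii), I combine (i) and (ii). By (i), $K_\bullet(x_1,\ldots,x_l,R)$ is a bounded complex of finite free $R$-modules resolving $R/(x_1,\ldots,x_l)$, so it is a $K$-projective resolution. Hence for any $C \in \mathbf{D}(R)$,
$$\Rhom_R(R/(x_1,\ldots,x_l), C) \simeq \operatorname{Hom}_R(K_\bullet(x_1,\ldots,x_l,R), C) = K^\bullet(x_1,\ldots,x_l,C),$$
and also $R/(x_1,\ldots,x_l) \otimes^{\mathbb{L}}_R C \simeq K_\bullet(x_1,\ldots,x_l,R) \otimes_R C = K_\bullet(x_1,\ldots,x_l,C)$, where in both cases the right-hand sides are understood as totalizations of the relevant double complexes when $C$ is itself a complex. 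Part (ii) then supplies the desired shift by $-l$. The only subtle point is verifying that these identifications are natural enough to descend to the derived category, which reduces to the observation that quasi-isomorphisms in $C$ induce quasi-isomorphisms of Koszul tensor/Hom complexes by the usual spectral-sequence argument for bounded free complexes.
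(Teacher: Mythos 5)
Your proposal is correct and follows essentially the same route as the paper: part (i) is the standard regular-sequence induction (which the paper simply cites from Weibel), part (ii) is the self-duality of the Koszul complex via the exterior-algebra pairing (cited to Eisenbud in the paper) combined with finite-freeness of the terms, and part (iii) is the same formal deduction from (i) and (ii). The only cosmetic difference is that in (iii) you invoke $\mathcal{K}$-projectivity/$\mathcal{K}$-flatness of the bounded finite-free Koszul complex directly, whereas the paper additionally passes through a $\mathcal{K}$-injective resolution of $C$ and the canonical comparison map; both handle unbounded $C$ correctly.
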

\begin{proof}
	For	\textit{(i)} see \cite[Corollary 4.5.5]{Weibel}. For \textit{(ii)} one first checks that $K_\bullet(x_1,\dots,x_l,M)\simeq K_\bullet(x_1,\dots,x_l,R) \otimes_R M.$ The self-duality of the Koszul complex  (cf. \cite[Proposition 17.15]{eisenbud1995commutative}) tells us $$K^\bullet(x_1,\dots,x_l,R) \simeq K_\bullet(x_1,\dots,x_l,R)[-l].$$ Since the complex $K_\bullet(x_1,\dots,x_l,R)$ consists of finitely generated free modules \textit{(ii)} follows from the fact that for every free $R$-module $N$ we have a canonical isomorphism $\operatorname{Hom}(N,R) \otimes_R - \cong \operatorname{Hom}(N,-).$ The point \textit{(iii)} follows formally from the above: By \textit{(i)} we can take $K^\bullet = K_\bullet(x_1,\dots,x_l,R)[-l]$ as a bounded projective (hence $\mathcal{K}$-flat \footnote{Since we do not assume that $C$ is bounded, we use $\mathcal{K}$-flat resp. $\mathcal{K}$-injective  resolutions for computing the derived functors. We refer to \cite[\href{https://stacks.math.columbia.edu/tag/06Y7}{Section 06Y7}]{stacks-project} resp.  \cite[\href{https://stacks.math.columbia.edu/tag/070G}{Section 070G}]{stacks-project} for the definitions.}) resolution of $R/(x_1,\dots,x_l)$ and take any $\mathcal{K}$-injective resolution $C^\bullet$ of $C.$ Now the canonical map
	$$C\otimes^\mathbb{L}_{R} K^\bullet \to \Rhom_R(\Rhom(K^\bullet,R[0]),C) = \Rhom_R(K_\bullet,C) $$
	(cf. \cite[\href{https://stacks.math.columbia.edu/tag/0A60}{Tag 0A60}]{stacks-project} for $L^\bullet = R[0]$) 
	can be seen to induce an isomorphism of the terms in 
	$$\operatorname{Tot} (C^\bullet \otimes K^\bullet) \to \operatorname{Hom}^\bullet( K_\bullet,C^\bullet)$$
	by again using that $K^\bullet$ consists of finitely generated free modules in every degree like in \textit{(ii)}. 
\end{proof}

\begin{lem} \label{lem:iterativecones} Let $R$ be a commutative ring and let $M$ be a module over $R[x_0,\dots,x_d].$ 
	Fix $k \in \{1 ,\dots d\}$ and suppose that $M$ is killed by $x_j$ for every $j\geq k.$ Set $C^\bullet := K^\bullet(x_0,\dots,x_{k-1},M)$ then $$K^\bullet(x_0,\dots,x_d,M) \simeq \bigoplus_{n=0}^{d-k+1} C^\bullet[-n]^{\binom{d-k+1}{n}}.$$
	
\end{lem}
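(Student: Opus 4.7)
The plan is to peel off the variables $x_d, x_{d-1}, \dots, x_k$ one at a time from $K^\bullet(x_0,\dots,x_d,M)$, showing at each step that the outer cone splits because the corresponding $x_j$ annihilates $M$, and tracking the multiplicities of the occurring shifts via the combinatorics of Definition~\ref{def:combo} and Remark~\ref{rem:combinatorics}.

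First I would establish the cohomological iterated cone identity
\[ K^\bullet(x_0,\dots,x_n,M) \simeq \operatorname{cone}\bigl(K^\bullet(x_0,\dots,x_{n-1},M) \xrightarrow{x_n} K^\bullet(x_0,\dots,x_{n-1},M)\bigr)[-1]. \]
This follows by applying $\operatorname{Hom}(-,M)$ to the defining iterated cone of $K_\bullet(x_0,\dots,x_n,R)$ and using that $\operatorname{Hom}$ turns a cone into a shifted fibre, i.e.\ $\operatorname{Hom}(\operatorname{cone}(f),M) \simeq \operatorname{cone}(\operatorname{Hom}(f,M))[-1]$; since $x_n$ is central in $R[x_0,\dots,x_d]$, the induced map is again multiplication by $x_n$. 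For $n \geq k$ the hypothesis ensures that $x_n$ acts as zero on every term of $K^\bullet(x_0,\dots,x_{n-1},M)$ (each term is a direct sum of copies of $M$), so the cone is a cone of the zero map and splits. Combining the $[1]$-shift produced by splitting a trivial cone with the outer $[-1]$ gives, for every $n \in \{k,\dots,d\}$,
\[ K^\bullet(x_0,\dots,x_n,M) \simeq K^\bullet(x_0,\dots,x_{n-1},M) \,\oplus\, K^\bullet(x_0,\dots,x_{n-1},M)[-1]. \]

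Starting from $C^\bullet = K^\bullet(x_0,\dots,x_{k-1},M)$ and iterating this splitting $d-k+1$ times yields a decomposition of $K^\bullet(x_0,\dots,x_d,M)$ into shifted copies of $C^\bullet$. Writing the occurring shifts as a sequence, each iteration replaces the current list $y$ by $\widehat{y}\ast y$ (concatenating the same list with its shift-by-one), which is exactly the recursion in Definition~\ref{def:combo}. Hence after the $d-k+1$ steps the multiset of shifts is $y_{d-k+1}$, and the multiplicity of the shift $[-n]$ is $N(n,d-k+1) = \binom{d-k+1}{n}$ by Remark~\ref{rem:combinatorics}(iv). This gives the desired equivalence.

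The only genuinely delicate point is the bookkeeping of shifts when passing from the homological iterated cone of $K_\bullet(-,R)$ to a cohomological iterated cone for $K^\bullet(-,M)$: the $[-1]$ coming from $\operatorname{Hom}(\operatorname{cone},M)$ has to cancel against the $[1]$ produced when splitting a zero-cone, so that each induction step contributes one summand in unchanged position and one shifted by $[-1]$ (not $[+1]$). Once this is nailed down, the rest is pure Pascal-triangle combinatorics already packaged in Remark~\ref{rem:combinatorics}.
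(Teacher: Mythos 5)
Your argument is correct and is essentially the paper's own proof: the paper likewise splits the cone at each variable $x_j$ with $j\geq k$ (since it acts as zero on $M$) to get $K^\bullet(x_0,\dots,x_j,M)\simeq K^\bullet(x_0,\dots,x_{j-1},M)\oplus K^\bullet(x_0,\dots,x_{j-1},M)[-1]$, iterates, and counts the shifts via Definition~\ref{def:combo} and Remark~\ref{rem:combinatorics}(iv). Your explicit check that the $[-1]$ from $\operatorname{Hom}(\operatorname{cone}(-),M)$ cancels the $[1]$ from splitting the zero-cone is exactly the sign/shift bookkeeping implicit in the paper's identification $\operatorname{cone}(C^\bullet\xrightarrow{0}C^\bullet)[-1]\simeq C^\bullet\oplus C^\bullet[-1]$.
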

\begin{proof}
	Since $x_k$ acts as zero on $M$ we have $$K^\bullet(x_0,\dots,x_k,M) \simeq \cone(C^\bullet \xrightarrow{0} C^\bullet)[-1]\simeq C^\bullet \oplus C^\bullet[-1].$$
	Define a sequence of complexes $Y_0^\bullet:=C^\bullet$ and $Y^\bullet_{i+1} =  Y_i^\bullet[-1] \oplus Y_i^\bullet.$ We can proceed inductively to conclude $K^\bullet(x_0,\dots,x_d,M) \simeq Y^\bullet_{d-k+1}.$ It is easy to see that $Y^\bullet_{d-k+1}$ is a direct sum of shifts of $C^\bullet$ given according to the additive inverses of the terms in the sequence $y_{d-k+1}$ from Definition \ref{def:combo}.  Indeed $ Y_1^\bullet = C^\bullet[-1] \oplus C^\bullet[0], Y_2^\bullet = C^\bullet[-2] \oplus C^\bullet[-1] \oplus  C^\bullet[-1] \oplus C^\bullet[0]$ etc..
	The shift $C^\bullet[-n]$ appears precisely $N(n,d-k+1)$ times in $Y_{d-k+1}^\bullet.$
	By Remark \ref{rem:combinatorics} we have $N(n,d-k+1)=\binom{d-k+1}{n}.$
\end{proof}
\section{Abstract Herr complexes for augmented algebras} In this section we introduce an abstract formalism to study Herr complexes, which appear in the context of $(\varphi_L,\Gamma_L)$-modules. As a consequence of this formalism one obtains spectral sequences, which allow the comparison of different cohomology theories. The applicability of these spectral sequences relies on the cohomology theories being ``representable'' in the sense that they are isomorphic to $\Rhom_{A}(*,-)$ where $A$ is some abelian category with enough projectives containing the category of $(\varphi_L,\Gamma_L)$-modules and $*$ is a suitable ``trivial object'', which is the case for all cohomology theories considered in this article. Note that these results are not limited to univariate $(\varphi_L,\Gamma_L)$-modules but are applicable in a wider generality (e.g. one could consider multivariate $(\varphi,\Gamma)$-modules or more generally modules over a commutative augmented $R$-algebra together with a choice of $n$ commuting $D$-linear operators). 
Recall that an augmented $R$-algebra is an $R$-algebra $D$ together with an $R$-algebra homomorphism $\varepsilon \colon D \to R.$ 
\begin{defn}
	Let $D$ be a commutative augmented $R$-algebra and let $M$ be a $D[\underline{T}]=D[T_1,\dots,T_n]$-module. We view $D[\underline{T}]$ as an augmented $R$-algebra by mapping each $T_i$ to $1$ and define the \textbf{abstract Herr complex} $$C_{\underline{T},D}(M):=\Rhom_{D[\underline{T}]}(R,M).$$
\end{defn}
\begin{lem} \label{lem:nonsense} Let $A$ be a commutative ring. Let $I,J \subset A$ be two ideals. Let $M$ be a an $A$-module. We have canonical isomorphisms 
	\begin{align*}\Rhom_{A}(A/(I+J),M) &\simeq \Rhom_{A/I}(A/(I+J),\Rhom_{A}(A/I,M))\\
		&\simeq  \Rhom_{A/J}(A/(I+J),\Rhom_{A}(A/J,M)).\end{align*}
\end{lem}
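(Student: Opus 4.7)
The plan is to deduce both isomorphisms from the derived version of the change-of-rings adjunction applied to the quotient $A \twoheadrightarrow A/I$ (respectively $A \twoheadrightarrow A/J$).

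First I would recall that for any ring homomorphism $f\colon A\to B$, the forgetful functor on module categories is exact and admits $\operatorname{Hom}_A(B,-)$ as a right adjoint. Consequently $\operatorname{Hom}_A(B,-)$ preserves $\mathcal{K}$-injective complexes, and the classical tensor-hom adjunction
$$\operatorname{Hom}_A(N,M)=\operatorname{Hom}_B(N,\operatorname{Hom}_A(B,M)),$$
valid for any $B$-module $N$ and any $A$-module $M$, derives to a natural isomorphism
$$\Rhom_A(N,M)\simeq \Rhom_B(N,\Rhom_A(B,M))$$
in the derived category of $B$-modules.

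I would then specialise this general adjunction to $B=A/I$ and to $N=A/(I+J)$, observing that since $I\subset I+J$ the quotient $A/(I+J)$ is canonically a module over $A/I$. This yields the first of the two claimed equivalences. The second equivalence is obtained by exactly the same argument with the roles of $I$ and $J$ interchanged, using $B=A/J$ instead.

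No substantial obstacle arises here; the only technical point is to ensure that the derived composition is well-posed, for which the use of $\mathcal{K}$-injective resolutions suffices. This is precisely the framework already in force in the paper (compare the footnote in Lemma \ref{lem:koszulduality}).
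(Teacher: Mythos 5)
Your proposal is correct and takes essentially the same route as the paper: the paper likewise uses the change-of-rings adjunction $\operatorname{Hom}_{A/J}(-,\operatorname{Hom}_A(A/J,L))\cong\operatorname{Hom}_A(-,L)$ to see that $\operatorname{Hom}_A(A/J,-)$ preserves injectives and that the composite of the underived functors is $\operatorname{Hom}_A(A/(I+J),-)$, then concludes by the composition-of-derived-functors lemma. Your direct derivation of the adjunction with $\mathcal{K}$-injective resolutions is merely a different packaging of the same argument.
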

\begin{proof}
	The argument is symmetric in $I$ and $J,$ we thus prove that $$\Rhom_{A}(A/(I+J),M) \simeq \Rhom_{A/J}(A/(I+J),\Rhom_{A}(A/J,M)).$$
	Consider the natural map $A \to A/J.$ The functor $\operatorname{Hom}_A(A/I,-)$ takes injective $A$-modules to injective $A/I$-modules. (Indeed, suppose $L$ is an injective $A$-module then $\operatorname{Hom}_{A/I}(-, \operatorname{Hom}_{A}(A/I,L)) = \operatorname{Hom}_{A}(- \otimes_{A}A/I,L) = \operatorname{Hom}_A(-,L)$ is exact on $A/I$-modules.)
	Furthermore by identifying $$\operatorname{Hom}_A(A/(I+J),M) \cong \{m \in M \mid am=0 \text{ for all } a\in I+J\}$$
	we see that  $$\operatorname{Hom}_{A/J}(A/(I+J),\operatorname{Hom}_A(A/J,M)) = \operatorname{Hom}_A(A/(I+J),M)$$
	and the claim follows from \cite[\href{https://stacks.math.columbia.edu/tag/015M}{Lemma 015M}]{stacks-project} .
\end{proof}
\begin{rem}\label{abstract:herrcomplex} Let $D$ be a commutative augmented $R$-algebra. Let $M$ be a $D[\underline{T}]$-module then: 
	\begin{enumerate}[(i)] 
		\item	We have canonical isomorphisms
		
		\begin{align*}
			\Rhom_{D[\underline{T}]}(R,M) &\simeq \Rhom_{R[\underline{T}]}(R,\Rhom_{D[\underline{T}]}(R[\underline{T}],M)) \\
			&\simeq\Rhom_{D}(R, \Rhom_{D[\underline{T}]}(D,M)).	
		\end{align*}
		\item There is a canonical isomorphism
		$$\Rhom_{D[\underline{T}]}(R[\underline{T}],M) \simeq \Rhom_{D}(R,M).$$ 
		\item In particular (for $n=1$)
		\begin{align*}
			C_{T,D}(M) &\simeq \operatorname{cone}(\Rhom_{D}(R,M) \xrightarrow{T-1} \Rhom_{D}(R,M))  \\
			&\simeq\Rhom_{D}(R, \operatorname{cone}(M\xrightarrow{T-1}M))		
		\end{align*}
	\end{enumerate}
\end{rem}
\begin{proof} Apply Lemma \ref{lem:nonsense} to the ring $A=D[\underline{T}]$ and the ideals \\
	$I = \operatorname{ker(\varepsilon\colon D[\underline{T}] \to R[\underline{T}])}$ and $J = (T_1,\dots,T_n).$
	For \textit{(ii)} apply \cite[\href{https://stacks.math.columbia.edu/tag/0E1W}{Lemma 0E1W}]{stacks-project} to the ring map $D \to D[\underline{T}].$ Since $D[\underline{T}]$ is flat over $D$ one has $R \otimes_{D}^\mathbb{L} D[\underline{T}] \simeq R[\underline{T}].$ To see (\textit{iii}) one plugs the projective resolution \\
	$R[T] \xrightarrow{T-1} R[T]$ (resp. $D[T] \xrightarrow{T-1}D[T]$) of $R$ (resp. $D$) into the first (resp. second) quasi isomorphism in (\textit{i}) and uses (\textit{ii}).
\end{proof}

\begin{rem} \label{rem:abstractspectralsequences}
	Let $D \to D'$ be a morphism of commutative augmented $R$-algebras and let $M$ be a $D'[T]$-module. Then we have
	\begin{enumerate}[(i)]
		\item \begin{align}C_{T,D}(M) &\simeq \Rhom_{D'[T]}(R \otimes_{D[T]}^\mathbb{L}D'[T],M) \label{eq:as1}\\
			&\simeq \Rhom_{D'[T]}(R \otimes_{D}^\mathbb{L}D',M)\label{eq:as2}\\
			&\simeq \Rhom_{D[T]}(D'[T],C_{T,D'}(M))\label{eq:as3}\\
			&\simeq \Rhom_{D}(D',C_{T,D'}(M))\label{eq:as4}.\end{align}
		\item If one assumes that $D'$ is the quotient of $D$ by a regular sequence of length $l$ then $C_{T,D}(M)$ is quasi-isomorphic to $$D' \otimes_D^\mathbb{L}C_{T,D'}(M)[-l]$$
		\item There is a base change map $$C_{T,D}(M)\otimes^\mathbb{L}_{D[T]}D'[T] \to \Rhom_{D'[T]}( R \otimes^\mathbb{L}_{D[T]}D'[T],M \otimes_{D[T]}^\mathbb{L}D'[T])$$ which is an isomorphism if $R$ is perfect as a $D[T]$-module or $D'[T]$ is perfect as a $D[T]$-module. 
		
	\end{enumerate}
\end{rem}
\begin{proof}
	See \cite[\href{https://stacks.math.columbia.edu/tag/0E1V}{Tag 0E1V}]{stacks-project} for \textit{(i)} \eqref{eq:as1} and \textit{(iii)}. Equation \eqref{eq:as4} follows from \cite[\href{https://stacks.math.columbia.edu/tag/0E1W}{Lemma 0E1W}]{stacks-project}.  Equations \eqref{eq:as3} and \eqref{eq:as2} now follow by derived adjunctions from \eqref{eq:as1} and \eqref{eq:as4} respectively.
	The statement \textit{(ii)} follows from \ref{lem:koszulduality} (\textit{iii}).
\end{proof}
From the usual hyper-cohomology spectral sequence we obtain the following. 
\begin{cor} \label{cor:spectralsequencesabstract} Let $M$ be a $D'[T]$-module and $D \to D'$ a morphism of commutative augmented $R$-algebras. Then:
	\begin{enumerate}
		\item	There is a spectral sequence $$E_{2}^{p,q}=\operatorname{Ext}_D^p(D',H^q_{T,D'}(M)) \Rightarrow H^{p+q}_{T,D}(M).$$
		\item If $D'$ is the quotient of $D$ by a regular sequence of length $l$ then there is a spectral sequence
		$$E_{2}^{p,q} = \operatorname{Tor}^{D}_{l-p}(D',H^q_{T,D'}(M)) \Rightarrow H^{p+q}_{T,D}(M)$$
	\end{enumerate}
\end{cor}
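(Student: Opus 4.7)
The plan is to derive both spectral sequences directly from the quasi-isomorphisms established in Remark \ref{rem:abstractspectralsequences}, by feeding them into standard hypercohomology spectral sequences.

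For part (1), the starting point is equation \eqref{eq:as4} of Remark \ref{rem:abstractspectralsequences}, which gives
$$C_{T,D}(M) \simeq \Rhom_{D}(D', C_{T,D'}(M)).$$
The right-hand side is of the form $\Rhom_{D}(N, X^\bullet)$ with $N = D'$ concentrated in degree $0$ and $X^\bullet = C_{T,D'}(M)$. Taking a $\mathcal{K}$-injective resolution of $X^\bullet$ and filtering it by the ``good truncation'' produces the usual Grothendieck spectral sequence
$$E_2^{p,q} = \operatorname{Ext}_D^p(D', H^q(X^\bullet)) \Rightarrow H^{p+q}(\Rhom_{D}(D', X^\bullet)),$$
which under $H^q(X^\bullet) = H^q_{T,D'}(M)$ and the quasi-isomorphism above is exactly the spectral sequence claimed in (1).

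For part (2), part (ii) of Remark \ref{rem:abstractspectralsequences} provides the alternative identification
$$C_{T,D}(M) \simeq D' \otimes^{\mathbb{L}}_{D} C_{T,D'}(M)[-l].$$
Set $Y^\bullet := C_{T,D'}(M)[-l]$, so that $H^n(Y^\bullet) = H^{n-l}_{T,D'}(M)$. The standard hypercohomology spectral sequence for a derived tensor product (obtained, e.g., by resolving $D'$ by a $\mathcal{K}$-flat resolution over $D$ and filtering) yields
$$E_2^{p,q} = \operatorname{Tor}^{D}_{-p}(D', H^q(Y^\bullet)) \Rightarrow H^{p+q}(D' \otimes^{\mathbb{L}}_{D} Y^\bullet) = H^{p+q}_{T,D}(M).$$
Substituting $H^q(Y^\bullet) = H^{q-l}_{T,D'}(M)$ and reindexing $(p,q) \mapsto (p+l, q-l)$ (which preserves the total degree $p+q$) turns the $E_2$-term into $\operatorname{Tor}^{D}_{l-p}(D', H^q_{T,D'}(M))$, exactly as claimed.

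The only point requiring care is the bookkeeping for the shift by $[-l]$ in part (2); the conceptual content is simply the usual hypercohomology spectral sequence applied to the two identifications already proved, so no genuine obstacle arises.
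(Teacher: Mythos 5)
Your proposal is correct and follows precisely the route the paper intends: the paper's entire justification is the one-line remark "From the usual hyper-cohomology spectral sequence we obtain the following," and you supply exactly that, applying the second hypercohomology spectral sequence to the identifications \eqref{eq:as4} and Remark \ref{rem:abstractspectralsequences}(ii). The reindexing $(p,q)\mapsto(p+l,q-l)$ in part (2) is a degree-preserving shift and hence legitimate for a spectral sequence, so the bookkeeping checks out.
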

\begin{rem}
	We can use the second spectral sequence from Corollary \ref{cor:spectralsequencesabstract} to provide an alternative proof of \ref{lem:iterativecones} (for $R=K$ a field).
\end{rem}
\begin{proof}
	We consider the algebras $$D = K[x_1,\dots,x_{d}] \to D'=D/(x_k,\dots,x_d) \cong K[x_1,\dots,x_{k-1}]$$ and let $T$ act on $M$ as multiplication by $x_0.$
	Let $P^\bullet$ be the Koszul complex resolving $D'$ as a $D$-module and let $C^\bullet:= C_{T,D'}(M).$ Then the spectral sequence from \ref{cor:spectralsequencesabstract} is obtained from the double complex $P^\bullet \otimes_D C^\bullet$ by first taking cohomology along the $C^\bullet$ axis leaving us with $P^\bullet \otimes_DH^i(C).$ Since each $H^i(C)$ is a $D/(x_0,\dots,x_d)$-module the differentials along the $P^\bullet$ axis are all zero (as they are linear combinations of multiplication by $x_i$-maps) and hence the spectral sequence collapses at the $E_1$ page. As $K$-vector spaces we have $H^{n}_{T,D}(M) \cong \bigoplus_{i+j=n} P^{i}\otimes_{D}H^j(C).$ Since each $P^q$ is free of rank $\binom{l}{q}$ the result follows by computing the length $l$ of the regular sequence cutting out $D'$ which in our case is $d+1-k.$
\end{proof}
\section{Applications}
\label{sec:applications}
Let $K$ be a complete field extension of a finite extension $L$ of $\QQ_p$ of degree $d = [L:\QQ_p].$ Let $\Sigma = \{\sigma_1,\dots,\sigma_d\}$ be the set of embeddings $L \to \CC_p.$ We identify $L$ with a subfield of $\CC_p$ via $\sigma_1.$ 
Let $E \subset \CC_p$ be a Galois closure of $L.$  Denote by $\operatorname{Res}(\Gamma_L)$ the restriction of scalars of $\Gamma_L$ to $\QQ_p.$ 
In \cite[after Lemma 2.6]{Berger2016} Berger defines an E-basis $(\nabla_\sigma)_{\sigma \in \Sigma}$ of $E \otimes_{\QQ_p} \operatorname{Lie}(\operatorname{Res}\Gamma_L)$ with the property that a $(\varphi_L,\Gamma_L)$-module over $\cR_E$ is $L$-analytic if and only if $\nabla_\sigma$ acts as zero on $M$ for every $\sigma \neq \id.$ We henceforth abbreviate $\nabla_i:=\nabla_{\sigma_i}.$
Let $\mathfrak{g} := \operatorname{Lie}(\Gamma_L) \cong L\nabla_{1}$ and $\mathfrak{g}_0:= E \otimes_{\QQ_p}\operatorname{Lie}(\operatorname{Res}(\Gamma_L)) \cong \bigoplus_{i=1}^d E\nabla_i.$
\subsection{Cohomology}
\begin{defn} Let $F \subset \CC_p$ be a complete extension of $L$ which contains a Galois closure $E$ of $L.$ and let $M$ be a $(\varphi_L,\Gamma_L)$-module over $\cR_F$  Let $f \in \operatorname{End}_{F}(M)$ be an operator that commutes with the action of $\mathfrak{g}_0$ on $M.$ We view  $M$ as a module over $K[x_0,\dots,x_d]$ by letting $x_0$ act as $f-1$ and $x_i$ act as $\nabla_i$ for $i=1,\dots,d.$  
	We define $$C^\bullet_{f,\mathfrak{g}}(M):=K^\bullet(x_0, x_1,M)$$ and $$C^{\bullet}_{f,\mathfrak{g}_0}(M):=K^\bullet(x_0, \dots,x_d,M).$$
\end{defn}
The above complexes are quasi-isomorphic to a shift of the cone of $f-1$ on the respective Lie algebra cohomology with values in  $M.$ As a consequence of \ref{thm:LieundAN} and the preceding discussions the groups $H^i_{cts}(M):=H^0(\Gamma_L,H^i_{\varphi,\mathfrak{g_0}}(M)$ (resp. $H^i_{an}(M):=H^0(\Gamma_L,H^i_{\varphi,\mathfrak{g}}(M)$)) are isomorphic to the cohomology of $$\operatorname{cone}(\Rhom_{D^{\QQ_p\text{-an}}(\Gamma_L,K)}(K,M) \xrightarrow{\varphi_L-1}\Rhom_{D^{\QQ_p\text{-an}}(\Gamma_L,K)}(K,M))[-1]$$ (resp. for the $L$-analytic variant).
As a corollary of Lemma \ref{lem:iterativecones} we get the following comparison between cohomologies of $M.$
\begin{thm} \label{thm:ankoszul}
	Let $M$ be an $L$-analytic $(\varphi_L,\Gamma_L)$-module over $\cR_F$ and let $f \in \operatorname{End}_L(M)$ be an operator that commutes with the action of $\mathfrak{g}_0$ on $M.$
	\begin{enumerate}[(i)]
		\item  $C^{\bullet}_{f,\mathfrak{g}_0}(M)  \simeq \bigoplus_{n=0}^{d-1} C^{\bullet}_{f,\mathfrak{g}}(M)[-n]^{\binom{d-1}{n}}.$
	\end{enumerate}
\end{thm}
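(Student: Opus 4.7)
The statement is essentially a direct corollary of Lemma \ref{lem:iterativecones}, and the plan is simply to verify that the hypotheses of that lemma are satisfied in this setting.

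First, I would unpack the setup: by definition, $M$ is viewed as a module over $K[x_0,\dots,x_d]$ where $x_0$ acts as $f-1$ and $x_i$ acts as $\nabla_i$ for $i=1,\dots,d$. This is well-defined as an action since $f$ is assumed to commute with the $\mathfrak{g}_0$-action and the $\nabla_i$ commute among themselves (they form an $E$-basis of the abelian Lie algebra $E \otimes_{\QQ_p} \operatorname{Lie}(\operatorname{Res}\Gamma_L)$). With this translation, $C^\bullet_{f,\mathfrak{g}}(M) = K^\bullet(x_0,x_1,M)$ and $C^\bullet_{f,\mathfrak{g}_0}(M) = K^\bullet(x_0,\dots,x_d,M)$ by definition.

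Next, I would invoke $L$-analyticity: by Berger's characterisation (recalled in the discussion before the theorem, \cite[after Lemma 2.6]{Berger2016}), the $L$-analyticity of $M$ means precisely that $\nabla_\sigma$ acts as zero on $M$ for every $\sigma \neq \id$, that is, $\nabla_i$ acts as zero for $i = 2, \dots, d$. Equivalently, the action of $x_j$ on $M$ is zero for every $j \geq 2$.

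Thus the hypothesis of Lemma \ref{lem:iterativecones} is satisfied with $k=2$, and applying it directly yields
$$K^\bullet(x_0,\dots,x_d,M) \simeq \bigoplus_{n=0}^{d-1} K^\bullet(x_0,x_1,M)[-n]^{\binom{d-1}{n}},$$
which is exactly the claimed quasi-isomorphism. There is no serious obstacle here since the combinatorial bookkeeping has already been handled inside Lemma \ref{lem:iterativecones} (via the auxiliary sequence of Definition \ref{def:combo} and Remark \ref{rem:combinatorics}); the only content specific to the present section is the translation of $L$-analyticity into the statement that the higher Sen-type operators $\nabla_2,\dots,\nabla_d$ vanish on $M$.
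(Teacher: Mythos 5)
Your proof is correct and coincides exactly with the paper's own argument, which simply invokes Lemma \ref{lem:iterativecones} with $k=2$; you have merely spelled out the translation step (that $L$-analyticity forces $\nabla_i = 0$ for $i \geq 2$, i.e.\ $x_j$ kills $M$ for $j \geq 2$) that the paper leaves implicit.
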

\begin{proof}
	Apply Lemma \ref{lem:iterativecones} with $k = 2.$ 
\end{proof}
\begin{rem} By applying  Lemma \ref{lem:iterativecones} for larger $k$ one can obtain a similar result for (not nescessarily analytic) $(\varphi_L,\Gamma_L)$-modules on which a finite subset of $\{\nabla_{\sigma},\sigma \in \Sigma \}$ acts trivially.
\end{rem}
We get a generalisation of \cite[Theorem 0.2]{FX12} as a corollary.
\begin{cor} 	\label{cor:FX} Let $M$ be an $L$-analytic $(\varphi_L,\Gamma_L)$-module over $\cR_K$ where $L\subset K$ is a complete field extension (not necessarily containing a normal closure of $L$). Then
	we have an isomorphism of $F$-vector spaces
	\begin{equation}\label{eq:iso} H^i_{cts}(M) \cong H^0_{an}(M)^{\binom{d-1}{i}} \oplus H^1_{an}(M)^{\binom{d-1}{i-1}}  \oplus H^2_{an}(M)^{\binom{d-1}{i-2}},\end{equation}
	for every $i \geq 0.$ In particular:\\
	$H^1_{cts}(M) \cong H^1_{an}(M) \oplus H^0_{an}(M)^{[L:\mathbb{Q}_p]-1}$ and $H^{d+1}_{cts}(M) \cong H^2_{an}(M).$
\end{cor}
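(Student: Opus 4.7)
The plan is to deduce the corollary from Theorem~\ref{thm:ankoszul} in two stages: first, when $K$ already contains a Galois closure of $L$, by passing to $i$-th cohomology and then to $\Gamma_L$-invariants; second, for general $K$, by a finite Galois base change that reduces to the first case.

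If $K$ already contains a Galois closure $E$ of $L$, I would apply Theorem~\ref{thm:ankoszul} with $f=\varphi_L$; this is permitted since $\varphi_L$ commutes with $\Gamma_L$ and hence with $\mathfrak{g}_0$. Taking $i$-th cohomology of the resulting quasi-isomorphism yields
\[
H^i_{\varphi,\mathfrak{g}_0}(M) \;\cong\; \bigoplus_{n=0}^{d-1} H^{i-n}_{\varphi,\mathfrak{g}}(M)^{\binom{d-1}{n}}.
\]
Because $\mathfrak{g}$ is one-dimensional over $L$, the complex $C^{\bullet}_{\varphi_L,\mathfrak{g}}(M)$ is concentrated in cohomological degrees $0,1,2$, so only the indices $n\in\{i,\,i-1,\,i-2\}$ contribute nontrivial summands. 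The quasi-isomorphism originates from the purely algebraic Lemma~\ref{lem:iterativecones} on $K[x_0,\dots,x_d]$-modules and is therefore natural in the module argument; in particular it is $\Gamma_L$-equivariant, since $\Gamma_L$ acts on $M$ by automorphisms commuting with $\varphi_L$ and with each $\nabla_i$ (the latter because $\Gamma_L$ is abelian, so its adjoint action on $\mathfrak{g}_0$ is trivial). Applying the functor $H^0(\Gamma_L,-)$ to both sides and invoking the identifications $H^i_{cts}(M)=H^0(\Gamma_L,H^i_{\varphi,\mathfrak{g}_0}(M))$ and $H^i_{an}(M)=H^0(\Gamma_L,H^i_{\varphi,\mathfrak{g}}(M))$ then produces~\eqref{eq:iso}.

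For general $K$, I would set $F := K\cdot E$, a finite Galois extension of $K$ with group $G$. The base change $M_F := \cR_F \otimes_{\cR_K} M$ remains an $L$-analytic $(\varphi_L,\Gamma_L)$-module over $\cR_F$, since the vanishing condition $\nabla_\sigma M = 0$ for $\sigma\neq\id$ is preserved under coefficient extension; hence the first case supplies~\eqref{eq:iso} for $M_F$. Because $\cR_K\to\cR_F$ is finite étale Galois, the formation of the Koszul complexes $C^{\bullet}_{\varphi_L,?}(-)$ and of their cohomology commutes with this base change, giving $H^i_{?}(M_F) \cong F \otimes_K H^i_{?}(M)$ for $?\in\{cts, an\}$. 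Taking $G$-invariants of~\eqref{eq:iso} for $M_F$ is exact because $|G|$ is a unit in $K$, and so descends the decomposition to~\eqref{eq:iso} for $M$.

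The main technical point is the $\Gamma_L$- and $G$-equivariance of the quasi-isomorphism of Theorem~\ref{thm:ankoszul}. Both reduce to the naturality of Lemma~\ref{lem:iterativecones}: any endomorphism of the underlying module commuting with $\varphi_L - 1$ and $\nabla_{\id}$, and preserving the vanishing locus of the remaining $\nabla_\sigma$, induces compatible endomorphisms on both sides of the decomposition. For the $\Gamma_L$-action this is immediate from abelianness of $\Gamma_L$; for the $G$-action one uses that $G$ commutes with $\varphi_L$ and fixes $\nabla_{\id}$ (since $G$ fixes the embedding $\id\colon L\hookrightarrow F$), while merely permuting the remaining $\nabla_\sigma$ with $\sigma\neq\id$ among themselves---which is harmless because they all act as zero.
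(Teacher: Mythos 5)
Your proposal is correct and follows essentially the same two-stage argument as the paper: apply Theorem \ref{thm:ankoszul} with $f=\varphi_L$ and take cohomology and $\Gamma_L$-invariants when $K$ contains a Galois closure, then pass to $F=KE$ and use finite Galois descent in general. The only (cosmetic) difference is that the paper descends each cohomology group separately via base change of the abstract Herr complexes over the distribution algebras (Remark \ref{rem:abstractspectralsequences}), so it never needs the $G$-equivariance of the decomposition itself, which you instead verify directly using that the $\nabla_\sigma$, $\sigma\neq\id$, act as zero.
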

\begin{proof} First assume that $K$ contains a normal closure of $L.$
	The first part follows from Theorem \ref{thm:ankoszul} for $f= \varphi_L$ by taking cohomology and $\Gamma_L$-invariants. For the second part one uses $N(1,d-1) = d-1, N(d+1,d-1)=N(d,d-1)=0$ and $N(0,d-1)=N(d-1,d-1)=1.$
	For the general case
	let $E$ be a normal closure of $L$ inside $\CC_p$ and let $F=KE.$  Let us consider the base change map from Remark \ref{rem:abstractspectralsequences} \textit{(iii)} where $D' = F \otimes_K D$ and $D$ is an augmented $K$-algebra (to be chosen below). The base change map is an isomorphism since $D'$ is finite free over $D.$ Since $D'$ is furthermore flat we have that all derived tensor products are represented by the usual tensor product. 
	In particular $F \cong K \otimes^{\mathbb{L}}_D D'.$ Let $M$ be a $D[T]$-module and let $N = D'\otimes_DM.$ 
	Then $$\Rhom_{D[T]}(K,N) \simeq \Rhom_{D'[T]}(F,N) \simeq D' \otimes_D   \Rhom_{D[T]}(K,M).$$
	The cohomology groups on the left-hand side carry a natural action of $G(F/K)$ via $N$ and $G(F/K)$ acts on the right-hand side by acting on $D'.$ The induced map 
	$$H^i_{T,D'}(N) \xrightarrow{\cong} D' \otimes_D H^i_{T,D}(M) \cong F\otimes_K H^i_{T,D}(M) $$ is equivariant with respect to these actions and taking $G$-invariants shows $H^i_{T,D}(M) \cong (H^i_{T,D'}(N))^{G}.$
	Now let $M$ be an $L$-analytic $(\varphi_L,\Gamma_L)$-module over $\cR_K$ and let $N:= F \otimes_K M.$ We can apply \eqref{eq:iso} to $N,$ reinterpret the continuous (resp. analytic) cohomology in terms of abstract Herr complexes for $D = D^{\QQ_p{\text{-an}}}(\Gamma_L,F)$ (resp.  $D^{L{\text{-an}}}(\Gamma_L,F))$ and then use the preceding considerations to get the same result for the cohomology of $M.$

\end{proof}
In \cite[Question 5.2.12]{SchneiderVenjakobRegulator} Schneider and Venjakob ask, whether for an étale $(\varphi_L,\Gamma_L)$-module $M,$ and an open subgroup $U\subset\Gamma_L$ the continuous cohomology groups $H^i_{cts}(\varphi_L^\mathbb{N} \times U,M)$ vanish outside of degree $[0,2]$ and whether they are finite dimensional. Theorem \ref{thm:ankoszul} shows that in general we should not expect the Lie algebra cohomology to be concentrated in $[0,2].$ If $M$ is analytic, then $H^0(\Gamma_L,H^i_{\varphi,\mathfrak{g}}(M)) \neq 0$ for some $i \in [0,2]$ implies $H^0(\Gamma_L,H^{d+i-1}_{\varphi,\mathfrak{g}_0}(M)) \neq 0$ by looking at the three highest cohomology groups in Theorem \ref{thm:ankoszul}. A similar statement holds if one replaces $\varphi_L$ by another operator commuting with $\Gamma_L$ (for example the left-inverse $\Psi$-operator). More explicitly, one can for $d>2$ consider the rank one module attached to the Lubin-Tate character $\chi_{LT},$ which by \cite[Theorem 5.19]{FX12} has non-trivial $H^1_{an}$ and thus has non-trivial $H^d_{cts}.$
\subsection{Euler characteristics} Let $M$ be an $L$-analytic $(\varphi_L,\Gamma_L)$-module over $\cR_K$.
In this subsection we assume that $H^i_{an}(M)$ is finite dimensional and denote its dimension by $h^i_{an}.$\footnote{The finiteness is known for large $K$ due to \cite{steingart2022finiteness}.}
Then by Corollary \ref{cor:FX} $H^i_{cts}(M)$ is finite dimensional as well and we denote its dimension by $h^i_{cts}(M).$ For $* \in \{\text{an},\text{cts}\}$ we denote the corresponding Euler characteristic by $\chi_{*}:=\sum_{i} (-1)^ih^i_{*}.$
\begin{rem}\label{rem:Nchi} Let $N_\chi:= \sum_{k=0}^{d-1}(-1)^kN(k,d-1).$ If $d=1,$ then $N_\chi=1.$ Otherwise $N_\chi=0.$
\end{rem}
\begin{proof}
	Let $V$ be a one-dimensional $K$-vector space and let $C_0:= V[0].$ We define inductively $C_{i+1}:= C_i \oplus C_i[-1].$ It is easy to see that $N_\chi$ is the Euler-characteristic  of the complex $C_{d-1}$. It is $1,$ if $d=1$ and otherwise the Euler-characteristics of $C_{i}$ and $C_i[-1]$ cancel out for all $0 \leq i \leq d-2$ and by induction $C_{d-1}$ has Euler-characteristic $0.$ 
\end{proof}
\begin{rem} Assume $\chi_{an}$ is finite. Then
	$$\chi_{cts} = N_\chi \chi_{an}.$$
\end{rem}
\begin{proof}
	Redo the proof of Remark \ref{rem:Nchi} with $C_0 = C_{f,\mathfrak{g}}(M).$
\end{proof}
\begin{rem}
	Let $M$ be an $L$-analytic $(\varphi_L,\Gamma_L)$-module over $\cR_{K},$ such that $\chi_{an}(M) \neq 0.$ Suppose  $L \neq \QQ_p$ then
	\begin{enumerate}[(i)]
		\item $C_{an}(M)$ and $C_{cts}(M)$ are not quasi-isomorphic.
		\item If $M$ is the base change of an étale $(\varphi_L,\Gamma_L)$-module over $\mathcal{R}_L$ attached to $V \in \operatorname{Rep}_L(G_L),$ then $C_{cts}(M)$ is not quasi-isomorphic to a base-change of $C^\bullet_{cts}(G_L,V)$ with respect to any $L$-algebra.
	\end{enumerate}
\end{rem}
\begin{proof}
	(i) and (ii) follow from the fact that both $C_{an}(M)$ and $C_{cts}(G_L,V)$ have non-zero Euler characteristic but $N_\chi=0$ and since any $L$-algebra is flat and the base change preserves the Euler-Poincaré-characteristic.
\end{proof}
Note that examples of modules with non-zero $\chi_{an}$ are provided by Colmez in \cite{colmez2016representations} in the rank one case provided that $K$ is large enough (see also \cite{steingart2022iwasawa}). 
The above result is surprising at first glance as one would expect $C^\bullet_{cts}(\mathbf{D}^\dagger_{rig}(V))$ to be isomorphic to Galois cohomology and hence concentrated in $[0,2]$ with Euler characteristic $-[L:\QQ_p]\operatorname{dim}_L(V).$ From the classical case we know that the value of the Euler characterstic can be expressed as $-\operatorname{rank}_{o_L\llbracket \Gamma_L\rrbracket[1/p]}(H^1_{Iw}(V)),$ which gives us a different perspective on this problem and heuristically explains, why the Euler characterstic of $C_{cts}(M)$ is zero: If $M$ is analytic, then the analogue of the first Iwasawa cohomology, i.e.\ $M^{\Psi=1},$ is a $D(\Gamma_L,L)$-module - in particular torsion when viewed as a module over the algebra of $\QQ_p$-analytic distributions and hence of rank $0$.

\subsection{Cup products}
Corollary \ref{cor:FX} makes it interesting to investigate where the non-trivial elements of $H^2_{cts}(M)$ for an analytic $(\varphi_L,\Gamma_L)$-module $M$  come from, if, say, $H^2_{an}(M)$ is trivial. We show that cup products with elements of $H^1_{cts}(\cR_L) \setminus H^1_{an}(\cR_L)$ are a source of such elements provided that $H^0(M)=0.$ (Since $H^0_{an}(M)=H^0_{cts}(M)$ we write simply $H^0(M).$) In this subsection assume $R=K \subset \CC_p$ is a closed subfield containing a normal closure of $L.$

\begin{defn}
	Let $M$ be a $D[T]$-module. We define the cup product 
	$$\cup \colon H^i_{T,D}(M) \times H^j_{T,D}(K) \to H^{i+j}_{T,D}(M)$$ via the Yoneda product in the category of $D[T]$-modules.
\end{defn}
Recall that for the continuous (resp. analytic) cohomology groups of a (analytic) $(\varphi_L,\Gamma_L)$-module we have  $$ H^1_{cts}(M)\cong H^i_{T,D}(M)$$ resp. $$H^1_{an}(M)\cong H^i_{T,D'}(M),$$ where $D = D^{\QQ_p\text{-an}}(\Gamma_L,K)$ (resp. $D' = D^{L\text{-an}}(\Gamma_L,K)$) and $T$ acts as $\varphi_L$. At the same time, the first continuous (resp.\ analytic) cohomology group parametrises extensions $0 \to M \to E \to \cR_K \to 0$ of $(\varphi_L,\Gamma_L)$-modules over $\cR_K$ (resp. analytic $(\varphi_L,\Gamma_L)$-modules over $\cR_K.$) (cf. \cite[Theorem 0.1]{FX12}).
\begin{rem}
	The natural map $K \to \cR_K$ induces isomorphisms
	$$H^1_{T,D}(K) \cong H^1_{T,D}(\cR_K)$$
	and
	$$H^1_{T,D'}(K) \cong H^1_{T,D'}(\cR_K).$$
\end{rem}
\begin{proof} Using \ref{thm:LieundAN} and \ref{cor:FX} it suffices to show the statement for the $L$-Lie-algebra variant. 
	This is done in \cite[Proposition 5.10 (b)]{FX12} (cf. also \cite[Remarque 5.17]{colmez2016representations} for a similar statement over a large field).
\end{proof}
As a consequence of the above we can now describe the cup product of a  $1$-extension of $(\varphi_L,\Gamma_L)$-modules with a $1$-extension of $\cR_K,$ by taking the corresponding elements in $\operatorname{Ext}^1_{D[T]}(K,M),\operatorname{Ext}^1_{D[T]}(K,K)$ (resp. for $D'$) and composing them to get a $2$-extension $\operatorname{Ext}^2_{D[T]}(K,M) \cong H^2_{cts}(M).$ We are not aware of an evident description of the elements of $H^2_{cts}(M)$ in terms of $2$-extensions of $(\varphi_L,\Gamma_L)$-modules over $\cR_K$.  
\begin{rem} \label{prop:descriptionofcup} Let $?$ be either $\text{an}$ or $\text{cts}.$ For a (analytic if $? = \text{an}$) $(\varphi_L,\Gamma_L)$-module $M$ over $\cR_K$ 
	consider the pairing 
	$$H^1_{?}(M) \times H^n_{?}(K) \to H^{n+1}_{?}(M).$$
	Let $\xi \in H^1_{?}(M)$ and let $v \in H^n_{?}(K)$ then $\xi \cup v = \delta(v)$ under the map $H^n_{?}(K) \xrightarrow{\delta} H^{n+1}_{?}(M)$ induced by $\xi.$
	
\end{rem}
\begin{proof}
	See \cite[Chapter VII, Section 5]{mitchell1965theory}.
\end{proof}

\begin{lem}  \label{lem:cupkernel} Let $M$ be an analytic $(\varphi_L,\Gamma_L)$-module satisfying $H^0(M)=H^2_{\text{an}}(M)=0.$  Let $E \in \operatorname H^1_{an}(M)$ be a non-split extension. Then $$\operatorname{Ker}(H^1_{cts}(\cR_K) \xrightarrow{\delta} H^2_{cts}(M)) = H^1_{an}(\cR_K).$$ 
\end{lem}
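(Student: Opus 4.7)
The plan is to establish the two inclusions separately, using the long exact sequences in $H^*_{an}$ and $H^*_{cts}$ attached to $0 \to M \to E \to \cR_K \to 0$ together with the functorial direct sum decomposition of Corollary \ref{cor:FX}.

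For $H^1_{an}(\cR_K) \subseteq \operatorname{Ker}(\delta)$, I would observe that $H^2_{an}(M) = 0$ forces the analytic boundary $\delta_{an}\colon H^1_{an}(\cR_K) \to H^2_{an}(M)$ to vanish, and then invoke naturality: the comparison square between the analytic and continuous LES makes the composition $H^1_{an}(\cR_K) \hookrightarrow H^1_{cts}(\cR_K) \xrightarrow{\delta} H^2_{cts}(M)$ agree with the zero map $\delta_{an}$ followed by the comparison $H^2_{an}(M) \to H^2_{cts}(M)$. Here the first arrow is the embedding of $H^1_{an}(\cR_K)$ as a direct summand inside $H^1_{cts}(\cR_K)$ coming from Corollary \ref{cor:FX}.

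For $\operatorname{Ker}(\delta) \subseteq H^1_{an}(\cR_K)$, the continuous LES identifies $\operatorname{Ker}(\delta)$ with $\operatorname{image}(H^1_{cts}(E) \to H^1_{cts}(\cR_K))$. I would then exploit that the splitting in Corollary \ref{cor:FX} is functorial in the $(\varphi_L,\Gamma_L)$-module, since it descends from the iterated cone decomposition of Lemma \ref{lem:iterativecones}, which is manifestly a natural transformation of complexes. Applied to the morphism $E \to \cR_K$, this gives an identification
$$\operatorname{Ker}(\delta) \;=\; \operatorname{image}\bigl(H^0(E) \to H^0(\cR_K)\bigr)^{d-1} \;\oplus\; \operatorname{image}\bigl(H^1_{an}(E) \to H^1_{an}(\cR_K)\bigr)$$
sitting inside $H^0(\cR_K)^{d-1} \oplus H^1_{an}(\cR_K) \cong H^1_{cts}(\cR_K)$. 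The second summand is all of $H^1_{an}(\cR_K)$ by surjectivity in the analytic LES (again using $H^2_{an}(M)=0$), so the reverse inclusion reduces to showing $H^0(E) = 0$.

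For this final vanishing, the analytic LES combined with $H^0(M) = 0$ identifies $H^0(E)$ with the kernel of the boundary $\partial\colon H^0(\cR_K) \to H^1_{an}(M)$, which by Remark \ref{prop:descriptionofcup} is cup product with $[E]$. Using the standard fact that $H^0(\cR_K) = K$ (only constants are $\Gamma_L$- and $\varphi_L$-invariant in the Robba ring) together with $[E] \neq 0$ from the non-splitness hypothesis, this map is injective and $H^0(E) = 0$ follows. The main technical obstacle in executing the plan will be carefully justifying the functoriality of the splitting in Corollary \ref{cor:FX}, and hence the boxed identification above; once this is tracked back to the natural iterated-cone decomposition of Lemma \ref{lem:iterativecones}, the rest of the argument is a straightforward diagram chase.
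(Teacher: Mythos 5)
Your proposal is correct and follows essentially the same route as the paper's proof: identify $\operatorname{Ker}(\delta)$ with the image of $H^1_{cts}(E)\to H^1_{cts}(\cR_K)$ via the long exact sequence, use non-splitness to get $H^0(E)=0$, and then apply Corollary \ref{cor:FX} to $E$ to see that this image lies in $H^1_{an}(\cR_K)$, while the easy inclusion follows from $H^2_{an}(M)=0$ and the factorisation of $\delta$ through the analytic boundary. The only (harmless) deviations are that you deduce $H^0(E)=0$ from the boundary map $H^0(\cR_K)\to H^1_{an}(M)$ being cup product with the non-zero class $[E]$ rather than from the paper's direct observation that a non-zero invariant of $E$ would split the extension, and that you invoke functoriality of the full splitting of Corollary \ref{cor:FX} (which does hold, since Lemma \ref{lem:iterativecones} is a natural decomposition) where the paper only needs $H^1_{cts}(E)=H^1_{an}(E)$ together with naturality of the analytic-to-continuous comparison map.
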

\begin{proof}
	Suppose $v \in H^1_{cts}(\cR_K)$ with $\delta(v) =0.$ This means $v$ is in the image of $H^1_{cts}(E) \to H^1_{cts}(\cR_K)$ but from $E$ being non-split we can deduce $H^0(E)=0$ (as any non-zero element $e \in H^0(E)$ would define a split of the sequence by mapping $1 \in K$ to $e$). From \ref{cor:FX} it follows that $H^1_{cts}(E)= H^1_{an}(E)$ and thus $v \in H^1_{an}(\cR_K).$ If $v \in H^1_{an}(\cR_K)$ then  $\delta(v) = 0$ since $\delta$ factors over the natural map $0=H^2_{an}(M) \to H^2_{cts}(M).$
\end{proof}

\begin{thm} \label{thm:surjectivecup} Suppose $M$ is an analytic $(\varphi_L,\Gamma_L)$-module satisfying $$H^0(M)=H^2_{\text{an}}(M)=0$$ and assume $\operatorname{dim}_K(H^1_{an}(M))=1.$\footnote{Under the assumption of the analytic Euler-Poincaré formula (i.e. $\sum (-1)^i\operatorname{dim}H^i_{an}(M)= -\operatorname{rank}(M)$) this is equivalent to $M$ being of rank $1.$ This formula is known in the rank $1$ case for big $K$ (cf. \cite[Remark 6.3]{steingart2022iwasawa}).}
	Then the map 
	$$\cup \colon H^1_{\text{an}}(M) \otimes H^1_{\text{cts}}(\cR_K) \to H^{2}_{\text{cts}}(M)$$ is surjective and induces an isomorphism $$ H^1_{\text{an}}(M) \otimes H^1_{\text{cts}}(\cR_K)/H^1_{\text{an}}(\cR_K) \cong  H^{2}_{\text{cts}}(M)$$.
\end{thm}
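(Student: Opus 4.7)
The plan is to combine the explicit decomposition of continuous cohomology from Corollary \ref{cor:FX} with the identification of cup product and connecting map from Remark \ref{prop:descriptionofcup} and the kernel computation in Lemma \ref{lem:cupkernel}, so that the whole theorem reduces to a dimension count.

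First I would read off the dimensions of both sides. Taking $i=2$ in Corollary \ref{cor:FX} and using $H^0(M)=H^2_{an}(M)=0$ together with $\dim_K H^1_{an}(M)=1$ gives $\dim_K H^2_{cts}(M)=\binom{d-1}{1}=d-1$. Applying the same corollary to the trivial module $\cR_K$ with $i=1$ yields $H^1_{cts}(\cR_K) \cong H^1_{an}(\cR_K)\oplus H^0(\cR_K)^{d-1}$, and since $H^0(\cR_K)=K$ is one-dimensional, $\dim_K\bigl(H^1_{cts}(\cR_K)/H^1_{an}(\cR_K)\bigr)=d-1$. Thus both sides of the proposed isomorphism have dimension $d-1$, so it suffices to produce a surjection.

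Next I would fix a non-zero $\xi \in H^1_{an}(M)$; since $H^1_{an}(M)$ parametrises analytic extensions $0 \to M \to E \to \cR_K \to 0$ and every split extension represents the zero class, $\xi$ corresponds to a non-split $E$, so Lemma \ref{lem:cupkernel} applies and gives
\[
\operatorname{Ker}\bigl(\delta_\xi \colon H^1_{cts}(\cR_K) \to H^2_{cts}(M)\bigr) = H^1_{an}(\cR_K).
\]
By Remark \ref{prop:descriptionofcup}, the map $v \mapsto \xi \cup v$ agrees with $\delta_\xi$, so its image has dimension $\dim_K H^1_{cts}(\cR_K) - \dim_K H^1_{an}(\cR_K)=d-1=\dim_K H^2_{cts}(M)$ and hence coincides with $H^2_{cts}(M)$. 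This at once gives surjectivity of the full cup product $\cup \colon H^1_{an}(M) \otimes_K H^1_{cts}(\cR_K) \to H^2_{cts}(M)$, and, Lemma \ref{lem:cupkernel} being valid for any non-zero $\xi$, also shows that every element of $H^1_{an}(\cR_K)$ is annihilated by cupping against every $\xi \in H^1_{an}(M)$. Hence the cup product descends to a well-defined surjection
\[
H^1_{an}(M)\otimes_K \bigl(H^1_{cts}(\cR_K)/H^1_{an}(\cR_K)\bigr) \twoheadrightarrow H^2_{cts}(M),
\]
which, being a surjection between $(d-1)$-dimensional $K$-vector spaces by the first step, must be an isomorphism.

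I do not foresee any serious obstacle here: all the substantive input has already been done in Lemma \ref{lem:cupkernel} and Corollary \ref{cor:FX}, and the present argument is essentially bookkeeping of dimensions. The only minor verification is that any non-zero class in $H^1_{an}(M)$ is non-split as an extension, which is automatic since a split extension represents the zero class in $H^1$.
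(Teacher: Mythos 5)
Your proof is correct and follows essentially the same route as the paper's: fix a nonzero $\xi \in H^1_{an}(M)$, invoke Lemma \ref{lem:cupkernel} to identify the kernel of $\delta_\xi$ with $H^1_{an}(\cR_K)$, and conclude by the dimension count furnished by Corollary \ref{cor:FX}. The only cosmetic difference is that the paper phrases the conclusion via injectivity of the induced map on $H^1_{cts}(\cR_K)/H^1_{an}(\cR_K)$ and compares dimensions, whereas you phrase it via surjectivity onto $H^2_{cts}(M)$; these are the same argument.
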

\begin{proof} 
	From \ref{lem:cupkernel} we know that for every $0 \neq \xi \in H^1_{an}(M)$ the induced map $H^1_{cts}(\cR_K)/H^1_{an}(\cR_K) \to H^2_{cts}(M), v \mapsto v \cup \xi$ is injective. In particular the image is a $[L:\QQ_p]-1$-dimensional subspace of $H^2_{cts}(M).$ The statement follows for dimension reasons since we know from \ref{cor:FX} that $H^2_{cts}(M)$ is of dimension $([L:\QQ_p]-1)\operatorname{dim}(H^1_{an}(M)) =[L:\QQ_p]-1 .$ Using that $H^1_{an}(\cR_K)$ is $2$-dimensional (cf. \cite{FX12}) we deduce by similar arguments that $H^1_{cts}(\cR_K)$ is of dimension $([L:\QQ_p]-1)\operatorname{dim}(H^0_{an}(K)) + \operatorname{dim}(H^1_{an}(K)) = [L:\QQ_p]-1+2 = [L:\QQ_p]+1.$
\end{proof}
\begin{thm} \label{thm:nondegcup} Let $M$ be analytic and with the property $$H^0_{an}(M) = H^2_{an}(M)=0.$$
	The pairing $$ H^1_{\text{an}}(M) \otimes H^1_{\text{cts}}(\cR_K)/H^1_{\text{an}}(\cR_K) \to H^2_{cts}(M)$$ is non-degenerate. 
\end{thm}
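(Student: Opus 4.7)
The plan is to deduce both halves of the non-degeneracy statement directly from Lemma~\ref{lem:cupkernel}. First, I would invoke Remark~\ref{prop:descriptionofcup} to reinterpret $v \mapsto \xi \cup v$, for a fixed $\xi \in H^1_{an}(M)$, as the connecting homomorphism $\delta \colon H^n_{cts}(\cR_K) \to H^{n+1}_{cts}(M)$ associated to the extension class $\xi$. Before turning to non-degeneracy, I would check that the pairing descends to the quotient as claimed: if $v \in H^1_{an}(\cR_K)$, then by naturality of cup products $\xi \cup v$ lies in the image of $H^2_{an}(M) \to H^2_{cts}(M)$, which vanishes by hypothesis; this is precisely the inclusion $H^1_{an}(\cR_K) \subseteq \Ker(\delta)$ already used inside the proof of Lemma~\ref{lem:cupkernel}.

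For non-degeneracy in the first slot, I would fix $0 \neq \xi \in H^1_{an}(M)$; Lemma~\ref{lem:cupkernel} states that $\Ker(\delta) = H^1_{an}(\cR_K)$, so the induced map $H^1_{cts}(\cR_K)/H^1_{an}(\cR_K) \to H^2_{cts}(M)$ is injective, and hence $\xi$ pairs non-trivially with some class in the quotient as soon as that quotient is non-zero. For non-degeneracy in the second slot, I would fix a non-zero $\bar v$ in the quotient, represented by some $v \notin H^1_{an}(\cR_K)$; for any non-zero $\xi \in H^1_{an}(M)$ the same lemma yields $\xi \cup v \neq 0$, so $\bar v$ pairs non-trivially with $\xi$.

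The only mild care needed concerns the degenerate corners: if $H^1_{an}(M) = 0$, then Corollary~\ref{cor:FX} together with the hypothesis $H^0(M) = H^2_{an}(M) = 0$ forces $H^2_{cts}(M) = 0$ and the pairing lands in the zero space; likewise the quotient $H^1_{cts}(\cR_K)/H^1_{an}(\cR_K)$ collapses precisely when $L = \QQ_p$, and in both boundary situations the non-degeneracy assertion becomes vacuous. Beyond this bookkeeping I do not anticipate any genuine obstacle: Lemma~\ref{lem:cupkernel} already encapsulates the essential computation, and the remainder of the argument amounts to translating the Yoneda interpretation of the cup product supplied by Remark~\ref{prop:descriptionofcup} into the kernel assertion on both sides.
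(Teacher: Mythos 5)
Your proof is correct and takes essentially the same route as the paper: both reduce the assertion to Lemma~\ref{lem:cupkernel}, and the paper's second paragraph merely re-derives that lemma's content in Yoneda language (via Mitchell's lifting lemma for $E'$ through an extension of $K$ by $E$, then identifying $\operatorname{Ext}_{D[T]}(K,E)$ with $\operatorname{Ext}_{D'[T]}(K,E)$), so your direct invocation of the lemma for both slots is a harmless streamlining. One small inaccuracy in your edge-case bookkeeping: when $H^1_{an}(M)=0$ but $L\neq\QQ_p$ the statement is not \emph{vacuous} --- the quotient $H^1_{\text{cts}}(\cR_K)/H^1_{\text{an}}(\cR_K)$ is non-zero and a zero space cannot pair non-degenerately with it, so non-degeneracy in the second slot would genuinely fail; this tacit assumption that a non-split class exists in $H^1_{an}(M)$ is, however, equally implicit in the paper's own proof, so it does not signal a gap specific to your argument.
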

\begin{proof} First let $E \in H^1_{an}(M) \neq 0.$
	Then the proof of \ref{lem:cupkernel} shows that the induced map $H^1_{cts}(\cR_K)/H^1_{an}(\cR_K) \to H^2_{cts}(M)$ is even injective (in particular non-zero).	
	
	Let $E' \in H^1_{cts}(\cR_K)$ and let $E \in H^1_{an}(M)$ such that $E \cup E' = 0.$ Write $D:= D^{\QQ_p\text{-an}}(\Gamma_L,K)$ and $D':= D^{L\text{-an}}(\Gamma_L,K).$
	By \cite[Lemma 4.1]{mitchell1965theory} $E'$ fits into an exact sequence of $D[T]$-modules $0 \to M \to F \to E' \to 0$ with some $F \in \operatorname{Ext}_{D[T]}(K,E).$ If $E$ is non-split we have $\operatorname{Ext}_{D[T]}(K,E) = \operatorname{Ext}_{D'[T]}(K,E)$ by \ref{cor:FX}. This expresses $E'$ as a quotient of a $D'$-module. In particular $E' \in H^1_{\text{an}}(K)$ as desired.
\end{proof}
The quotient $H^1_{\text{cts}}(\cR_L)/H^1_{\text{an}}(\cR_L)$ can be interpreted in terms of Galois cohomology. 
We denote by $E_{un} \in H^1_{cts}(G_{\QQ_p},L) \cong \operatorname{Hom}_{cts}(G_{\QQ_p}^{ab},L)$ the unramified extension of $L$ corresponding to the valuation $\QQ_p^\times \to L$ by local class field theory and denote by $E_{\QQ_p}$ the extension $0 \to L \to E' \to L \to 0$ on which $g \in G_{\QQ_p}$ acts via  $$\begin{pmatrix} 1 & \log (\chi_{cyc}(g)) \\
	0 & 1
\end{pmatrix}.$$
Similary we can define an extension $E_{LT}  \in H^1_{cts}(G_L,L)$ $$\begin{pmatrix} 1 & \log (\chi_{LT}(g)) \\
	0 & 1
\end{pmatrix}.$$ These extensions are always non-split and not contained in the span of $E_{un}$ (resp. $\operatorname{res}(E_{un})$), as we will see shortly.
By interpreting $H^1_{cts}(\cR_L)$ in terms of extensions of $(\varphi_L,\Gamma_L)$-modules and observing that étale objects are stable under extensions we can view $H^1_{cts}(\cR_L)$ as a subspace of the Galois cohomology group $H^1_{cts}(G_L,L).$ By similar reasoning $H^1_{an}(\cR_L)$ can be viewed as a subspace of $H^1_{cts}(\cR_L).$
It follows from the proof of \ref{thm:surjectivecup} that  $H^1_{cts}(\cR_L)$ is a $[L:\QQ_p]+1$-dimensional subspace of $H^1_{cts}(G_L,L).$ A small computation using Tate-Duality and the Euler-Poincaré formula for Galois cohomology shows that the latter is of the same dimension and hence $H^1_{cts}(\cR_L) \cong H^1_{cts}(G_L,L).$
\begin{thm} 
	Let $L\neq \QQ_p$ and assume $L/\QQ_p$ is Galois. Under the above isomorphism $H^1_{cts}(\cR_L) \cong H^1_{cts}(G_L,L)$ the subspace $H^1_{an}(\cR_L)$ is spanned by $E_{LT}$ and the image of $E_{un}$ under the restriction map. $E_{\QQ_p}$ is not $L$-analytic.
\end{thm}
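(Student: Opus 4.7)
The plan is to prove the theorem in four parts: (1) show $\res(E_{un})$ is $L$-analytic, (2) show $E_{LT}$ is $L$-analytic, (3) establish their linear independence inside $H^1_{cts}(G_L,L)$, and (4) show $E_{\QQ_p}$ is not $L$-analytic. Since the proof of Theorem \ref{thm:surjectivecup} (via \cite{FX12}) gives $\operatorname{dim}_L H^1_{an}(\cR_L) = 2$, the first three steps together identify $H^1_{an}(\cR_L)$ as the span of these two classes, and step (4) yields the final assertion.

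For step (1) I would invoke the fact that an unramified $L$-linear representation of $G_L$ is automatically $L$-analytic: every twist $\CC_p \otimes_{L,\sigma} V$ is an unramified $\CC_p$-semilinear representation of $G_L$, so it is trivial by Sen theory. Hence every $\nabla_\sigma$ vanishes, and the corresponding $(\varphi_L,\Gamma_L)$-module over $\cR_L$ is $L$-analytic via Berger's equivalence (Theorem 1.1 (iii)). Applied to the two-dimensional unramified representation underlying $\res(E_{un})$, this settles one generator. For step (2) I would observe that $\chi_{LT}\colon\Gamma_L\to o_L^\times$ is $L$-analytic by construction, so its Lie-algebra differential $d\log\chi_{LT}\colon\mathfrak{g}=L\nabla_1\to L$ is $L$-linear, and after $E$-linear extension to $\mathfrak{g}_0 = \bigoplus_{\sigma\in\Sigma} E\nabla_\sigma$ it is supported on the $\sigma=\id$ component. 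Hence the nilpotent upper-triangular Lie-algebra cocycle representing $E_{LT}$ kills every $\nabla_\sigma$ with $\sigma\neq\id$, giving the $L$-analyticity.

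For step (3): the extension $\res(E_{un})$ is unramified whereas $E_{LT}$ is ramified, because $\chi_{LT}|_{I_L}$ is nontrivial and $\log$ does not annihilate a nontrivial character of a $\ZZ_p$-module of the form $1+\pi_L o_L$. Thus no nontrivial $L$-linear combination of $\res(E_{un})$ and $E_{LT}$ can be simultaneously ramified and unramified, forcing linear independence. For step (4) I would argue through local class field theory: $\chi_{cyc}|_{G_L}$ corresponds via Lubin--Tate reciprocity to the norm character $N_{L/\QQ_p}\colon L^\times\to\QQ_p^\times\hookrightarrow L^\times$, whose Lie-algebra differential is $\operatorname{Tr}_{L/\QQ_p}\colon L\to\QQ_p$. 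This is $\QQ_p$-linear but not $L$-linear when $L\neq\QQ_p$, and after $E$-linear extension it equals $\sum_{\sigma\in\Sigma}\sigma$, so it has nonzero component along every $\nabla_\sigma$ in $\mathfrak{g}_0$. Consequently $\nabla_\sigma$ acts nontrivially on the extension $E_{\QQ_p}$ for $\sigma\neq\id$, so $E_{\QQ_p}$ is not $L$-analytic.

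The main obstacle will be the bookkeeping required to pass rigorously between the Galois-cohomological description of the cocycles $\log\chi_{LT}$, $\log\chi_{cyc}$ on the Galois side and the Sen-operator/Lie-algebra description of $L$-analyticity on the side of the associated $(\varphi_L,\Gamma_L)$-modules over $\cR_L$. This uses Berger's analyticity criterion from \cite{Berger2016} together with the isomorphism $H^1_{cts}(\cR_L)\cong H^1_{cts}(G_L,L)$ and the Lie-algebra comparison of Theorem \ref{thm:LieundAN}. Once these identifications are in place, each of (1)--(4) reduces to the computation of an explicit $L$-linear (resp.~non-$L$-linear) differential, which is short.
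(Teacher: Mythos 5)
Your proposal is correct and reaches the theorem by the same overall strategy as the paper (exhibit two explicit analytic classes, use $\dim_L H^1_{an}(\cR_L)=2$ from \cite{FX12}, and show $E_{\QQ_p}$ fails analyticity at the non-identity embeddings), but the individual verifications are genuinely different. The paper computes Sen operators explicitly (following Porat/Fontaine): for $E_{LT}$ the operator is nilpotent nonzero at $\id$ and zero at $\sigma\neq\id$, hence $L$-analytic; for $E_{\QQ_p}$ it is non-semisimple at every embedding, so the class is not Hodge--Tate at any embedding and in particular not $\CC_p$-admissible at $\sigma\neq\id$; and linear independence of $E_{un}$ and $E_{LT}$ is deduced from the fact that the Hodge--Tate classes form an $L$-subspace containing $E_{un}$ but not $E_{LT}$. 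You instead work with the infinitesimal data: the differential of $\log\chi_{LT}$ is $L$-linear, hence supported on $\nabla_{\id}$, while the differential of $\chi_{cyc}|_{G_L}$, i.e.\ of the norm, is $\operatorname{Tr}_{L/\QQ_p}=\sum_\sigma\sigma$ after $E$-linear extension, hence hits every $\nabla_\sigma$; and you get independence from ramified versus unramified, which is more elementary than the Hodge--Tate argument. These are the same invariants (the $\sigma$-component of the derivative is exactly the off-diagonal entry of the Sen operator at $\sigma$), so both routes work; yours avoids Fontaine's semisimplicity criterion, the paper's avoids any discussion of differentials of characters. The one point where you should be careful is the step you call bookkeeping: neither $\operatorname{res}(E_{un})$ nor $E_{\QQ_p}|_{G_L}$ factors through $\Gamma_L$ (the unramified direction survives on $H_L$), so "the differential of the cocycle along $\nabla_\sigma$'' is not literally defined on these classes. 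You must either split off an unramified summand, using $\chi_{cyc}|_{G_L}=(N_{L/\QQ_p}\circ\chi_{LT})\cdot\eta$ with $\eta$ unramified (and unramified classes analytic with vanishing derivative, as in your step (1)), and then apply the Leibniz-rule computation of $\nabla_\sigma$ on the module $\cR_E\otimes_L V$ for the part that does factor through $\Gamma_L$; or pass through Sen theory at each embedding as the paper does. With that decomposition made explicit, your argument is complete.
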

\begin{proof}
	Since the $(\varphi_L,\Gamma_L)$-action (resp. the Galois action) on $L$ is trivial $H^1_{cts}(L) \cong H^1_{cts}(\cR_L)$ can be identified with $\operatorname{Hom}_{cts}(L^\times,L) \cong \operatorname{Hom}_{cts}(G_L^{ab},L)$ by local class field theory. The injectivity of $$H^1_{cts}(\QQ_p,L) \to H^1_{cts}(G_L,L)$$ follows from the inflation-restriction exact sequence since $L$ is a $\QQ$-vector space.
	It is easy to see, that the valuation $ v_p\colon \Q_p^\times \to L$ defines an unramified non-split extension of $L$ by itself, which is in particular Hodge-Tate at every embedding and, since the weights add to $0$ at every embedding the representation $\operatorname{res}(v_p)$ is analytic. One computes that $E_{\QQ_p}$ has Sen operator\footnote{Cf. \cite[3.2.3]{Fontaine} for the definition of the operator. } $$\begin{pmatrix}0 & 1 \\
		0 & 0
	\end{pmatrix} $$ and concludes that $E_{\QQ_p}$ is not Hodge-Tate using \cite[Proposition 6.5]{Fontaine} because the above operator is not semi-simple\footnote{I found this example in unpublished notes \cite{Porat} of Gal Porat, which used to be available on his webpage.} This remains true for the restriction of $E_{\QQ_p}$ at every embedding of $L,$ since the cyclotomic character has Galois stable image.
	By a similar argument the extension $E_{LT}$ has Sen operator $$\begin{pmatrix}0 & 1 \\
		0 & 0
	\end{pmatrix}$$ at the identity embedding, but, since $\chi_{LT}$ has Hodge-Tate weight $0$ at the non-identity embeddings, we can conclude that the Sen operator at the other embeddings vanishes. In particular $E_{LT}$ is $L$-analytic (but not Hodge-Tate). 
	The subset of $H^1_{cts}(L)$ consisting of Hodge-Tate extensions can be identified with the kernel of the natural map $H^1(G_L,L) \to H^1(G_L,B_{HT})$ and is hence a sub $L$-vector space. We conclude that $E_u$ and $E_{LT}$ are linearly independent. By \cite[Theorem 5.10]{FX12} $\dim_L(H^1_{an}(L))=2$ and hence $E_u$ and $E_{LT}$ form a basis of $H^1_{an}(L).$ Lastly $\operatorname{res}(E_{\QQ_p})$ can not be contained in  $H^1_{an}(L),$ as it would have to be Hodge-Tate at the non-identity embeddings. 
\end{proof}
\section{A Dolbeaut complex for $p$-adic Galois representations}

\subsection{The complex analogy}
Consider the following example from complex geometry. 
Let $X$ be the complex plane and $N = C^\infty(X)$ the space of smooth $\CC$-valued functions. Then inside of $N$ we find $M:=\mathcal{O}(X)$ cut out by the Cauchy-Riemann differential equation $\overline{\partial}.$ 
Let us write $\operatorname{Sol}(N) := N^{\overline{\partial}=0} (= M).$ In this for $(i,j) \in \{(1,0),(0,1),(1,1)\}$ the spaces $\Omega^{(i,j)}$ of $(i,j)$-forms are isomorphic to $N$ and the Dolbeaut double complex 
$$\begin{tikzcd}
	\Omega^{(0,0)}(X)  \arrow[r, "\overline{\partial}"] \arrow[d, "\partial"] & {\Omega^{(0,1)}(X) } \arrow[d, "\partial"'] \\
	{\Omega^{(1,0)}(X) } \arrow[r, "\overline{\partial}"]               & {\Omega^{(1,1)}(X) }                       
\end{tikzcd}$$
is just:
$$\begin{tikzcd}
	N \arrow[r, "\overline{\partial}"] \arrow[d, "\partial"] & N \arrow[d, "\partial"'] \\
	N \arrow[r, "\overline{\partial}"]                       & N                       
\end{tikzcd}.$$
The total complex  
$$0 \to E^0(X) \xrightarrow{d}E^1(X) \to \dots $$ of this double complex computes de Rham cohomology $H^{n}_{dR}(X)$. 
The double complex spectral sequence produces the Dolbeaut to de Rham spectral sequence 
$$E_1^{p,q}(X) = \frac{\operatorname{ker}(\overline{\partial} \colon \Omega^{p,q} \to \Omega^{p,q+1})}{\operatorname{im}(\overline{\partial}\colon  \Omega^{p,q-1}  \to \Omega^{p,q})} \implies H^{p+q}_{dR}(X).$$ In our case $\overline{\partial}$ is surjective and hence
$$ E_1^{p,q}(X)  =    \begin{cases}
	\operatorname{Sol}(N), & \text{if}\ q=0 \text{ and } p=0,1 \\
	0, & \text{otherwise}
\end{cases}.$$ 
Moreover, the $E_1$ page is just $	\operatorname{Sol}(N) \xrightarrow{\partial}\operatorname{Sol}(N).$
Philosophically, the comparison results of the preceding section were not the right perspective, since we were trying to compare (in our analogy) the cohomology of $$M\xrightarrow{\partial}M$$ with the cohomology of the total complex $$\begin{tikzcd}
	M\arrow[r, "\overline{\partial}"] \arrow[d, "\partial"] & M \arrow[d, "\partial"'] \\
	M \arrow[r, "\overline{\partial}"]                       & M.                      
\end{tikzcd}$$ Instead we should be looking for some bigger object $N$ such that $M=\operatorname{Sol}(N)$ is the subset of $L$-analytic vectors of $N$ and compare a cohomology attached to $N$ with the analytic cohomology of $\operatorname{Sol}(N)$ in a suitable sense.
\subsection{Dolbeaut resolutions of analytic vectors} In this section let us assume that $L/\QQ_p$ is Galois and we denote by $\Sigma = \{\id= \sigma_1,\dots,\sigma_d\}$ the embeddings $L \to \CC_p.$ Set $\Sigma_0:= \Sigma \setminus\{\sigma_1\}.$
To make the analogy described above precise we will work over the ring $B:=(\tilde{\mathbf{B}}^\dagger_{\operatorname{rig},L})^{pa}$ from \cite{Berger2016}.
In order to keep the notation in this section consistent with the rest of the article we adjust all intervals to be given terms of absolute values rather than valuations and write everything in a way which does not require fixing a normalisation of the norm on $\CC_p.$ Implicitly we endow $\CC_p^\flat$ with the norm satisfying $\abs{x^\#} = \abs{x}_{\flat}$ for all $x \in \CC_p^\flat$ (cf. \cite[Section 1.4]{Schneider2017}). Let $w$ be the Teichmüller lift of a pseudouniformiser of $o_{\CC_p^\flat}.$
Any element of $f \in W(o_{\CC_p^\flat})_L[1/\pi_L,1/w]$ can be written uniquely as a convergent series $$f= \sum_{k\gg -\infty}\pi_L^k[x_k]$$ with $x_k$ a bounded sequence in $\CC_p^\flat.$ For $0<r<1$ define $$\abs{f}_r := \sup_k \abs{\pi_L}^k(\abs{x_k}_\flat)^{-\log_p(r)}$$ and $\abs{f}_I := \sup_{r \in I} \abs{f}_r.$ \footnote{The precise value $\abs{f}_r$ depends on the chosen normalisation, but an expression of the form $\abs{f}_r = \abs{x}$  for some $x \in \CC_p$ is independent of the normalisation. } For a closed interval $I \subseteq (0,1)$ we denote by $\tilde{\fB}^I$ the completion of $ W(o_{\CC_p^\flat})_L[1/\pi_L,1/w]$ with respect to $\abs{-}_I.$ The subspace $\tilde{\fB}^I_L := (\tilde{\fB}^I)^{H_L}$ is then a Banach space representation of $\Gamma_L$ and we denote by $B^I:= (\tilde{\fB}^I_L)^{la}$ the subspace consisting of locally analytic vectors which is the set of elements $v \in \tilde{\fB}^I_L$ such that the orbit map $\Gamma_L \to \tilde{\fB}^I_L$ mapping $\gamma$ to $\gamma v$ is locally $\QQ_p$-analytic.\\
For the Fréchet spaces $\tilde{\fB}^{[r,s)} = \varprojlim_{r<s'<s}\tilde{\fB}^{[r,s']}$ we denote by $B^{[r,s)}$ the subspace $(\tilde{\fB}_L^{[r,s)})^{pa}$ of $\tilde{\fB}_L^{[r,s)} =(\tilde{\fB}^{[r,s)})^{H_L}$ consisting of pro-analytic vectors, i.e., elements $v \in \tilde{\fB}_L^{[r,s)}$ whose image in $\tilde{\fB}_L^{[r,s']}$ is analytic for every $r<s'<s.$
We give a short summary of results we need from \cite{Berger2016}. Berger studies $F$-linear representations of $G_K,$ and for technical reasons introduces a base change to a normal closure $E$ of $F.$ To keep notation simpler we will assume $L=E=F=K$ and we will be studying (analytic) $L$-linear $G_L$-representations. 
Berger attaches to an $L$-linear representation $V$ of $G_L$ a $(\varphi_L,\Gamma_L)$-module over $B^{[r,1)}$ for large enough $0<r<1$ (given by the pro-analytic vectors in $\tilde{D}^{\dagger,r}_{rig,L}(V) :=(\tilde{\fB}^{[r,1)} \otimes_L V)^{H_L}$).
He constructs differential operators $\partial_{\sigma} \colon B^I \to B^I$ which should be seen as analogues of the Dolbeaut operator in complex geometry.
To this end he defines elements $y_\sigma$ for each $\sigma \in \Sigma$ such that $y_{\id}$ is the usual ``variable'' with $\varphi_q(y_{\id}) = [\pi_L](y_{\id})$ and $\gamma y_{\id}= [\chi_{LT}(\gamma)](y_{\id})$ and the action on the $y_\sigma$ is given by twisting the coefficients with the embedding $\sigma.$ For each $\sigma \in \Sigma$ we have a corresponding element $t_\sigma := \operatorname{log}_{LT}^{\sigma}(y_\sigma),$ where the superscript $\sigma$ denotes applying the embedding $\sigma$ to the coefficients. We define $\partial_\sigma:= \frac{ g_\sigma }{t_\sigma}\nabla_\sigma,$ where $g_\sigma \in B^I$ is the formal derivative of $t_\sigma$  with respect to $y_\sigma.$ One can show that $\nabla_\sigma \colon B^I \subset t_\sigma B^I.$ The $L$-analytic vectors $(B^{I})^{L\text{-an}}$ are given by the subspace $(B^I)^{\partial_2,\dots,\partial_n=0},$ where $\partial_i:= \partial_{\sigma_i}.$

We summarise the main technical results.
\begin{thm}\label{thm:monodromy}
	The following hold:
	\begin{enumerate} 
		\item 	Let $M$ be a free $B^I$-module with a compatible $\Gamma_L$-action such that $\nabla_{\tau}(M) \subset t_{\tau}M$ for every $\tau \in \Sigma_0.$ Then $$\operatorname{Sol}(M):= \{x \in M \mid \partial_{\tau}(x)=0 \text{ for all } \tau \in \Sigma_0\}$$ is a free $(B^{I})^{L\text{-an}}$-module and $$M = B^{I}\otimes_ {(B^{I})^{L\text{-an}}}\operatorname{Sol}(M).$$
		\item Let $V \in \operatorname{Rep}_L(G_L),$ then $(\tilde{D}^{\dagger,r}_{rig,L}(V))^{pa}$ is a free $(\tilde{\fB}^{[r,1)})^{pa}$-module of rank $\dim(V)$ stable under $\Gamma_L$ for $0<r<1$ large enough.
		\item If $V \in \operatorname{Rep}_L(G_L)$  is $\CC_p$-admissible at $\tau \in \Sigma,$ then $$\nabla_{\tau}(\tilde{D}^{\dagger,r}_{rig,L}(V))^{pa}\subset t_\tau(\tilde{D}^{\dagger,r}_{rig,L}(V))^{pa}.$$
	\end{enumerate}
\end{thm}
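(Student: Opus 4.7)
The plan is to treat each of the three items as a reformulation of the main technical results of \cite{Berger2016} and to indicate how they are extracted from that reference; I will not attempt to reprove the underlying constructions.

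For (1) I would proceed by induction on $|\Sigma_0|$. Fixing $\tau \in \Sigma_0$, the hypothesis $\nabla_\tau(M) \subset t_\tau M$ is exactly what is needed to make $\partial_\tau = (g_\tau/t_\tau)\nabla_\tau$ a well-defined $\Gamma_L$-equivariant endomorphism of $M$. The key single-variable input from Berger is a direct-sum decomposition $B^I = (B^I)^{\partial_\tau=0} \oplus t_\tau B^I$, together with its extension to free $B^I$-modules equipped with a connection of this regular type: the outcome is that $M^{\partial_\tau=0}$ is free over $(B^I)^{\partial_\tau=0}$ with $M \cong B^I \otimes_{(B^I)^{\partial_\tau=0}} M^{\partial_\tau=0}$. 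Iterating over all $\tau \in \Sigma_0$ and using the identification $\bigcap_{\tau \in \Sigma_0}(B^I)^{\partial_\tau=0} = (B^I)^{L\text{-an}}$, which is the characterisation of $L$-analytic vectors recalled just above, produces the claimed description of $\operatorname{Sol}(M)$ together with the base-change identity.

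For (2), the strategy is to combine overconvergence of $L$-linear $G_L$-representations on the Lubin-Tate tower with Berger's refinement that, after enlarging $r$ if necessary, passage to pro-analytic vectors yields a free module of the correct rank; $\Gamma_L$-stability is automatic since pro-analyticity is preserved by the continuous $\Gamma_L$-action. For (3), I would translate the definition of $\CC_p$-admissibility at $\tau$ — that $\CC_p \otimes_{L,\tau} V$ is trivial as a $\CC_p$-semi-linear representation — into the vanishing of the corresponding $\tau$-Sen operator on $V$, and then observe that this vanishing is precisely the obstruction for $\nabla_\tau$ to map $(\tilde{D}^{\dagger,r}_{rig,L}(V))^{pa}$ into $t_\tau (\tilde{D}^{\dagger,r}_{rig,L}(V))^{pa}$; this is exactly the monodromy hypothesis that (1) requires in the case $M = (\tilde{D}^{\dagger,r}_{rig,L}(V))^{pa}$.

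The main obstacle is not in the assembly sketched above but in the underlying constructions carried out in \cite{Berger2016}: producing the operators $\partial_\tau$ with the correct equivariance, establishing the decomposition $B^I = (B^I)^{\partial_\tau=0} \oplus t_\tau B^I$, and verifying that the monodromy arguments go through for free modules $M$ that are a priori not attached to a Galois representation. The proof in the present article will therefore amount to pinning down, for each of the three items, the precise proposition of loc. cit. that supplies it, and checking that the slightly more general hypotheses in (1) are still covered by Berger's arguments.
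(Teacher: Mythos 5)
Your proposal matches the paper's proof, which simply attributes the three items to \cite{Berger2016}: (1) to the proof of Theorem 6.1, (2) to Theorem A, and (3) to Lemma 10.2 of loc.\ cit. Your additional sketches of how those results are assembled (the iterated solution of the $\partial_\tau$-equations, and the translation of $\CC_p$-admissibility into vanishing of the $\tau$-Sen operator) are consistent with Berger's arguments, so no gap remains.
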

\begin{proof}
	The first point follows from the proof of \cite[Theorem 6.1]{Berger2016}. The second part is essentially Theorem $A$ in (loc.cit.). The third part is Lemma 10.2 in (loc.cit.).
\end{proof}

We will prove an analogue of the Dolbeaut-Lemma for the operators $\partial_{i}, i\neq 1.$ A technical inconvenience will be the fact that we do not assume that $M$ is stable under $\partial_{1}.$ However, $M$ is always stable under $\nabla_1.$ Another inconvenience is the fact that $\partial_i$ does not commute with $\varphi_L$ so we need to adjust our Dolbeaut complexes to make them $\varphi_L$-equivariant. Before doing so, we first study the resolution of $B = (\tilde{\fB}_L^I)^{la}$ itself.

\begin{lem}[$\overline{\partial}$-Lemma] \label{lem:poincare}
	Let $L\neq \QQ_p$ then:
	\begin{enumerate}[(i)]
		\item The operators $\partial_{\tau} \colon B \to B$ are surjective for every $\tau \in \Sigma.$
		\item If $d=[L:\QQ_p]>2$ and $x \in B$ satisfies $\partial_{\tau}x=0$ for all $\tau \in \Xi \subset \Sigma \setminus \{\id,\sigma\},$ where $\sigma \neq  \id,$ then we can find $z$ satisfying $\partial_\tau z=0$ for all $\tau \in \Xi $ and $\partial_\sigma z=x.$ 
	\end{enumerate}
	\begin{proof}
		Part \textit{(i)} is \cite[Corollary 5.5]{Berger2016}.
		For part \textit{(ii)} we recall that Berger explicitly defines a series (using the notation from (loc.cit.)) $$z = \sum_{\mathbf{i} \in \NN_0^{\Sigma_0}} \frac{x_{\mathbf{i}}}{\mathbf{i}_\sigma+1}(\mathbf{y}-\mathbf{y}_n)^{\mathbf{i}+1_\sigma}$$ satisfying $\partial_\sigma z=x,$ where $x_\mathbf{i}$ is given by
		$$x_\mathbf{i} := \frac{1}{\mathbf{i}!}\sum_{\mathbf{k} \in \NN_0^{\Sigma_0}}(-1)^\mathbf{k}\frac{(\mathbf{y}-\mathbf{y}_n)^{\mathbf{k}}}{\mathbf{k}!}\partial^{\mathbf{k}+\mathbf{i}}(x).$$
		Berger shows $\partial_{\tau}x_\mathbf{i} =0$ for all $\tau \neq \id$ and further shows $\partial_\tau((\mathbf{y}-\mathbf{y}_n)^\mathbf{k})=0$ if $\mathbf{k}_\tau =0,$ where $\mathbf{k}_\tau$ denotes the $\tau$-component of $\mathbf{k}.$
		The assumptions of (\textit{ii}) guarantee $x_\mathbf{i}=0$ as soon as $\mathbf{i}_\tau \neq 0$ for the embeddings $\tau \in \Xi .$ If $\mathbf{i}_\tau= 0$ for $\tau \in \Xi $ then, because $\sigma \notin \Xi ,$ we also have $(\mathbf{i}+1_\sigma)_\tau=0$ and hence $\partial_\tau((\mathbf{y}-\mathbf{y}_n)^{\mathbf{i}+1_\sigma})=0.$
		Combining all of the above shows $\partial_{\tau}z=0$ for all $\tau \in \Xi.$
	\end{proof}
	We remark that it is important to have at least two auxiliary embeddings for Lemma \ref{lem:poincare} to work.
\end{lem}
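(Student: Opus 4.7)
The plan is to prove \textit{(i)} by invoking Berger's explicit antiderivative construction for $\partial_\tau$, which is essentially \cite[Corollary 5.5]{Berger2016}. Since $\partial_\sigma = (g_\sigma/t_\sigma)\nabla_\sigma$ behaves like a formal partial derivative in the variable $y_\sigma$, one obtains a primitive $z$ of $x$ as a Taylor-type power series in the shifted variables $(\mathbf{y}-\mathbf{y}_n)$ around a suitable base point $\mathbf{y}_n$, and the real work is the coefficient estimation needed to ensure convergence on the interval $I$.

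For \textit{(ii)} the guiding heuristic is the smooth analogy: if $x$ is independent of the variables $y_\tau$ for $\tau \in I$, then the $y_\sigma$-antiderivative of $x$ at fixed $(y_\tau)_{\tau \neq \sigma}$ is also independent of those $y_\tau$, because partial derivatives in distinct directions commute. I would make this rigorous by writing down an explicit primitive of the form
$$z = \sum_{\mathbf{i} \in \NN_0^{\Sigma_0}} c_{\mathbf{i}}\,(\mathbf{y}-\mathbf{y}_n)^{\mathbf{i}+1_\sigma},$$
where the coefficients $c_{\mathbf{i}}$ are, up to factorials and sign, iterated derivatives of $x$ at $\mathbf{y}_n$, chosen to be manifestly $\partial_\tau$-closed for every $\tau \in \Sigma_0$. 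By construction $\partial_\sigma z = x$.

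The verification of $\partial_\tau z = 0$ for $\tau \in I$ then proceeds term-by-term. For each multi-index $\mathbf{i}$ there are two cases: if $\mathbf{i}_\tau > 0$, then $c_{\mathbf{i}}$ contains a factor $\partial_\tau^{k}(x)$ with $k \geq 1$, which vanishes by hypothesis; if $\mathbf{i}_\tau = 0$, then since $\sigma \neq \tau$ we also have $(\mathbf{i}+1_\sigma)_\tau = 0$, so the monomial $(\mathbf{y}-\mathbf{y}_n)^{\mathbf{i}+1_\sigma}$ is independent of $y_\tau$ and is killed by $\partial_\tau$. The hypothesis $\sigma \notin I$ enters crucially at exactly this point: it prevents the shift by $1_\sigma$ from perturbing a $\tau$-component that we need to keep at zero.

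The main obstacle will be checking convergence of the displayed series on the prescribed interval $I$, which requires uniform norm estimates on $\partial^{\mathbf{k}+\mathbf{i}}(x)$ in both multi-indices. These estimates rely on the pro-analytic structure of $B=(\tilde{\fB}^I_L)^{la}$ and on the quantitative bounds developed by Berger for the variables $y_\tau$ and $t_\tau$. The standing hypothesis $d>2$ is what ensures the combinatorics of $\NN_0^{\Sigma_0}$ has enough room for the argument; when $d=2$ the set $I$ is forced to be empty and \textit{(ii)} collapses to \textit{(i)}.
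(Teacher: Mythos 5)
Your proposal is correct and follows essentially the same route as the paper: both take Berger's explicit primitive $z=\sum_{\mathbf{i}}\frac{x_{\mathbf{i}}}{\mathbf{i}_\sigma+1}(\mathbf{y}-\mathbf{y}_n)^{\mathbf{i}+1_\sigma}$ and verify $\partial_\tau z=0$ for $\tau\in I$ term by term through the same two cases (if $\mathbf{i}_\tau>0$ the coefficient vanishes because it involves $\partial_\tau$ applied to $x$; if $\mathbf{i}_\tau=0$ the monomial is $\partial_\tau$-closed precisely because $\sigma\notin I$), together with $\partial_\tau$-closedness of the coefficients $x_{\mathbf{i}}$. The only difference is one of framing: the paper quotes Berger's Corollary 5.5 verbatim for the series, its convergence, and the identities $\partial_\sigma z=x$ and $\partial_\tau x_{\mathbf{i}}=0$, so the convergence estimates you flag as the main obstacle require no new work.
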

\begin{lem}[Dolbeaut Lemma] \label{lem:dolbeaut} Let $L\neq \QQ_p$. 
	Let $B:= (\tilde{\fB}_L^I)^{la}$. Then the natural map
	$B^{L-an}[0]  \to K^\bullet(\partial_2,\dots,\partial_d,B)$ is a quasi-isomorphism. 
\end{lem}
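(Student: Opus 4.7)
The plan is to prove, by induction on $m = |S|$, the stronger claim that for every non-empty subset $S \subseteq \Sigma_0$ the natural map
$$B^{\partial_\tau=0\,\forall\tau\in S}[0] \to K^\bullet(\{\partial_\tau\}_{\tau\in S}, B)$$
is a quasi-isomorphism. The lemma is the special case $S = \Sigma_0$.

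For the base case $m=1$, say $S=\{\sigma\}$, the complex is just $[B \xrightarrow{\partial_\sigma} B]$. Its $H^0$ is tautologically $B^{\partial_\sigma = 0}$, and its $H^1$ vanishes because $\partial_\sigma$ is surjective by Lemma \ref{lem:poincare}(i). This single step already handles the entire case $d = 2$.

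For the inductive step $m \geq 2$, pick any $\sigma \in S$ and set $S' := S \setminus \{\sigma\}$. The recursive cone definition of the Koszul complex yields a distinguished triangle whose induced long exact sequence in cohomology takes the form
$$\cdots \to H^{i-1}(K^\bullet_{S'}) \xrightarrow{\partial_\sigma} H^{i-1}(K^\bullet_{S'}) \to H^i(K^\bullet_S) \to H^i(K^\bullet_{S'}) \xrightarrow{\partial_\sigma} H^i(K^\bullet_{S'}) \to \cdots,$$
where I abbreviate $K^\bullet_T := K^\bullet(\{\partial_\tau\}_{\tau \in T}, B)$. By the inductive hypothesis $H^\bullet(K^\bullet_{S'})$ is concentrated in degree $0$ and equals $V := B^{\partial_\tau = 0\,\forall\tau\in S'}$, so the long exact sequence collapses to give $H^0(K^\bullet_S) = \ker(\partial_\sigma|_V)$, $H^1(K^\bullet_S) = \mathrm{coker}(\partial_\sigma|_V)$, and vanishing in all other degrees. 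The kernel is visibly $B^{\partial_\tau=0\,\forall\tau\in S}$, matching the claim, so only the surjectivity of $\partial_\sigma|_V$ remains to be checked.

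This surjectivity is exactly the content of Lemma \ref{lem:poincare}(ii) with auxiliary index set $I := S'$: since $S' \subseteq \Sigma_0 \setminus \{\sigma\} \subseteq \Sigma \setminus \{\id, \sigma\}$ and $|S| \geq 2$ forces $|\Sigma_0| \geq 2$, i.e.\ $d > 2$, all hypotheses are met. Given $x \in V$, the lemma produces $z$ satisfying $\partial_\tau z = 0$ for every $\tau \in S'$ and $\partial_\sigma z = x$; that is, $z \in V$ is a preimage of $x$. This completes the induction. The genuine substance of the argument is concentrated entirely in Lemma \ref{lem:poincare}(ii); once that is available, the iterative cone bookkeeping of the Koszul complex is purely formal, and I anticipate no further obstacle.
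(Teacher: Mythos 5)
Your proof is correct and follows essentially the same inductive strategy as the paper: reduce, one operator at a time, to the surjectivity of $\partial_\sigma$ on the common kernel of the previously handled operators, which is exactly Lemma \ref{lem:poincare}(ii). The only difference is cosmetic — you invoke the long exact sequence of the cone directly, whereas the paper manipulates the auxiliary exact complexes $E_k'$ and their subcomplexes $E_k$ by hand; your bookkeeping is cleaner but the mathematical content is identical.
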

\begin{proof}
	Because $\partial_2$ is surjective we have a short exact sequence 
	$$E_2: 0 \to B^{\partial_2=0} \to B \xrightarrow{\partial_2} B \to 0.$$ 
	For $n\geq 2$ we proceed inductively. Since $E_2$ is exact we have that
	$E_3':=\operatorname{Cone}(\partial_3 \colon E_2 \to E_2)[-1]$ is exact and of the form $$0 \to B^{\partial_2=0} \xrightarrow{\partial_3,\id} B^{\partial_2=0} \oplus B \xrightarrow{ \id \oplus -\partial_3, \partial_2} B^2 \to B \to 0.$$
	Consider the subcomplex $$E_3: 0 \to B^{\partial_2,\partial_3=0} \xrightarrow{\mbox{\phantom{textaa}}} B \xrightarrow{\mbox{\phantom{textaaaaaa}}} B^2 \to B \to 0. \hspace{0.5cm}$$
	A small computation shows $H^0(E_3)=0.$ Furthermore  $H^i(E_3)=0$ for $i\geq2$ as they are unaffected by the change from $E_3'$ to $E_3.$
	Since the original sequence $E_3'$ is exact, it remains to show that the image of $f\colon B \xrightarrow{-\partial_3,\partial_2} B^2$ is the same as the image of $g\colon B^{\partial_2=0} \oplus B \xrightarrow{\id \oplus -\partial_3,\partial_2} B^2.$ One inclusion is obvious hence let us consider $(a,b)$ in the image of $g.$ Then there exist $(x,y)$ such that $(x -\partial_3y,\partial_2y)= (a,b).$ Since $x \in B^{\partial_2=0}$ we can find $x' \in B^{\partial_2=0}$ with $\partial_3x'=x$ by Lemma \ref{lem:poincare}. Define $c = x'+y \in B$ then $$f(c) = (\partial_3(x'+y),\partial_2(x'+y)) = (x+\partial_3y,\partial_2y) = (a,b).$$
	Inductively we obtain for $k=3,\dots,d-1$ in each step an exact sequence 
	$$0 \to B^{\partial_2,\dots,\partial_k=0} \to B \xrightarrow{\prod \partial_i}\prod_{i=2}^{k} B \to \dots.$$
	Take the $\partial_{k+1}$-cone to get 
	$$ 0 \to B^{\partial_2,\dots,\partial_k=0} \to B^{\partial_2,\dots,\partial_k=0} \oplus B \to  B \oplus \prod_{i=2}^{k} B  \dots$$
	and apply \textit(ii) of Lemma \ref{lem:poincare} to show that the subsequence
	$$ 0 \to B^{\partial_2,\dots,\partial_{k+1}=0} \to B  \xrightarrow{\prod \partial_i} \prod_{i=2}^{k+1} B \to \dots$$ is exact. 
\end{proof}
\begin{lem} \label{lem:dsigma}
	Let $\gamma \in \Gamma_L$ and let $a  = \chi_{LT}(\gamma).$ Then for every $\sigma\in \Sigma$ we have  $ \partial_\sigma \circ \gamma = [a]'^{\sigma}(y_\sigma) \gamma \circ \partial_\sigma$ and $\partial_\sigma \circ \varphi_L = [\pi_L]'^{\sigma}(y_\sigma) \varphi_L \circ \partial_\sigma.$
\end{lem}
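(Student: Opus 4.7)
The plan is to reduce both identities to the chain rule for the Lubin--Tate logarithm, after sliding $\gamma$ (respectively $\varphi_L$) past the defining factorisation $\partial_\sigma = (g_\sigma/t_\sigma)\nabla_\sigma$. The strategy splits into three independent ingredients: commuting $\nabla_\sigma$ with $\gamma$ and $\varphi_L$, computing their action on the scalar $g_\sigma/t_\sigma$, and combining the two via the elementary identity $f \circ m_x = m_{f(x)} \circ f$ for multiplication operators.

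First I would observe that $\nabla_\sigma$ commutes with both $\gamma$ and $\varphi_L$ as operators on $B$. Since $\Gamma_L$ is abelian, the adjoint action on $\operatorname{Lie}(\operatorname{Res}(\Gamma_L))$ and its base change to $E=L$ is trivial, so $\gamma \circ \nabla_\sigma = \nabla_\sigma \circ \gamma$. The operator $\varphi_L$ commutes with the full $\Gamma_L$-action by definition of a $(\varphi_L,\Gamma_L)$-module, and being $L$-linear it commutes with the derived Lie-algebra action, yielding $\varphi_L \circ \nabla_\sigma = \nabla_\sigma \circ \varphi_L$.

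Next I would compute the action of $\gamma$ and $\varphi_L$ on $t_\sigma$ and $g_\sigma$ using $\gamma(y_\sigma) = [a]^\sigma(y_\sigma)$ and $\varphi_L(y_\sigma) = [\pi_L]^\sigma(y_\sigma)$. Applying $\sigma$ to the Lubin--Tate identity $\log_{LT}([a](T)) = a\log_{LT}(T)$ yields $\gamma(t_\sigma) = \sigma(a)\,t_\sigma$. Formally differentiating that identity with respect to $T$ gives $\log_{LT}'([a](T))\,[a]'(T) = a\log_{LT}'(T)$, so after $\sigma$-twisting and substituting $T = y_\sigma$ one obtains $\gamma(g_\sigma) = \sigma(a)\,g_\sigma/[a]'^\sigma(y_\sigma)$. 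Taking the quotient, $\gamma(g_\sigma/t_\sigma) = (g_\sigma/t_\sigma)/[a]'^\sigma(y_\sigma)$. The analogous computation with $\pi_L$ in place of $a$ yields $\varphi_L(g_\sigma/t_\sigma) = (g_\sigma/t_\sigma)/[\pi_L]'^\sigma(y_\sigma)$.

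Finally I would combine the ingredients: writing $m_x$ for multiplication by $x \in B$, the relation $f \circ m_x = m_{f(x)} \circ f$ for $f \in \{\gamma,\varphi_L\}$ together with the commutation of $\nabla_\sigma$ with $f$ gives
$$f \circ \partial_\sigma \;=\; m_{f(g_\sigma/t_\sigma)} \circ f \circ \nabla_\sigma \;=\; m_{f(g_\sigma/t_\sigma)} \circ \nabla_\sigma \circ f,$$
so that $\partial_\sigma \circ f$ and $f \circ \partial_\sigma$ differ precisely by the scalar ratio $(g_\sigma/t_\sigma)/f(g_\sigma/t_\sigma)$, which by the previous step equals $[a]'^\sigma(y_\sigma)$ when $f=\gamma$ and $[\pi_L]'^\sigma(y_\sigma)$ when $f=\varphi_L$. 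The main obstacle is not conceptual but notational --- keeping track of the $\sigma$-twist and the direction in which scalars move when passing operators past multiplication; once the chain-rule computation is in place, both identities fall out symmetrically.
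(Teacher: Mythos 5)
Your argument is correct and follows essentially the same route as the paper: the chain rule applied to the $\sigma$-twisted Lubin--Tate logarithm gives $\gamma(t_\sigma)=\sigma(a)t_\sigma$, $\varphi_L(t_\sigma)=\sigma(\pi_L)t_\sigma$ and $\sigma(a)g_\sigma=\gamma(g_\sigma)[a]'^{\sigma}(y_\sigma)$ (resp.\ with $\pi_L$), and then one commutes $\nabla_\sigma$ past $\gamma$ and $\varphi_L$ to move the factor $g_\sigma/t_\sigma$ across. Your packaging via multiplication operators and the explicit justification that $\nabla_\sigma$ commutes with the $(\varphi_L,\Gamma_L)$-action are only cosmetic refinements of the paper's proof, so there is nothing further to add.
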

\begin{proof}
	Recall that $\partial_{\sigma} = \frac{g_{\sigma}}{t_\sigma} \nabla_\sigma,$ where $g_{\sigma}$ is the formal derivative of $t_\sigma = \log_{LT}^ {\sigma}(y_\sigma)$ with respect to the variable $y_\sigma.$
	From $\gamma(y_\sigma) = [\chi_{LT}(\gamma)]^{\sigma}(y_\sigma)$ and $\varphi(y_\sigma) = [\pi_L]^\sigma(y_\sigma)$ we conclude $\gamma t_\sigma = \sigma(\chi_{LT}(\gamma)) t_\sigma$ and $\varphi_L(t_\sigma) = \sigma(\pi_L)t_\sigma.$
	From the chain rule we obtain $$\sigma(a)g_\sigma = \gamma(g_\sigma) [a]'^\sigma(y_\sigma)$$ and similary
	$$\sigma(\pi_L)g_\sigma = \varphi(g_\sigma) [\pi_L]'^\sigma(y_\sigma).$$
	Using that $\nabla_\sigma$ commutes with the $(\varphi,\Gamma_L)$-action and the above computations we obtain $$\gamma(\partial_\sigma(x))= \gamma(\frac{g_\sigma}{t_\sigma})\nabla_\sigma \gamma x = ([a]'^\sigma(y_\sigma))^{-1} \partial_\sigma(\gamma x)$$ and
	$$\varphi(\partial_\sigma(x))= \varphi(\frac{g_\sigma}{t_\sigma})\nabla_\sigma \varphi x = ([\pi_L]'^\sigma(y_\sigma))^{-1} \partial_\sigma(\varphi x).$$
\end{proof}
As a consequence of Lemma \ref{lem:dsigma} $\partial_\sigma$ does not commute with $\varphi_L.$ Instead we have that $\partial_\sigma$ defines a $(\varphi_L,\Gamma_L)$-equivariant map 
$$\partial_{\sigma}\colon B^I \to B^{I}d_\sigma; f \mapsto \partial_{\sigma}(f)d_\sigma,$$ where $d_\sigma$ is a formal symbol with $(\varphi,\Gamma)$-action given by $a\cdot d_\sigma = [a]'^\sigma(w_\sigma)d_\sigma.$\footnote{Here we mean that $(\varphi_L^n,\gamma) \in \varphi_L^{\NN_0} \times\Gamma$ acts as $a= \pi_L^n\chi_{LT}(\gamma)$.}

Since the Frobenius $\varphi_L\colon \tilde{\fB}^I \to \tilde{\fB}^{I^{1/q}}$ is not an endomorphism of $\tilde{\fB}^I$ we need to make the following rather technical definitions.
\begin{defn}
	Let $I  \in \{[r,s],[r,s)\}$ be a subinterval of $(0,1)$ and assume $s>r^{1/q}.$ Let $J:= [r^{1/q},s].$ 
	Let $$R^I \in \left\{\tilde{\fB}^I, (\tilde{\fB}^I)^{H_L} ,((\tilde{\fB}^I)^{H_L})^{pa},((\tilde{\fB}^I)^{H_L})^{L-pa}\right\}$$ (similarly for $R^J$). A $\varphi_L$-module $M^I$ over $R^I$ is a finite free $R^I$-module together with an isomorphism 
	$$R^J \otimes_{R^I,\varphi_L} M^I \cong M^J:=R^J\otimes_{R^I}M^I.$$
	
\end{defn}

\begin{defn}
	Let $M=M^I$ be a $\varphi_L$-module over $B=B^I$ endowed with a compatible and continuous action of $\Gamma_L$ such that $\nabla_\sigma(M) \subset t_{\sigma}M$ for all $\sigma \in \Sigma_0.$ We define the equivariant analytic resolution of $M$ as 
	$$ C^\bullet_{\Omega^{\Sigma_0}}(M): M \xrightarrow{\prod \partial_{\sigma_i}} \oplus_{i=2}^n M \otimes_B  Bd_{\sigma_i} \to \dots \xrightarrow{\sum (-1)^i \partial_{\sigma_i}} M\otimes_B Bd_{\sigma_2} \wedge \dots \wedge d_{\sigma_n},$$ which as a complex of vector spaces is given by $$K^\bullet(\partial_{\sigma_2},\dots,\partial_{\sigma_n},M)$$ endowed with an action of $(\varphi_L,\Gamma_L)$ induced by the action on $M$ and the symbols $d_{\sigma_i}.$ By construction each operator commutes with the action of $\gamma \in \Gamma_L$ and $\varphi_L$ and we may thus form
	$$C^\bullet_{\Omega^\Sigma}(M) := \operatorname{cone}[C_{\Omega^{\Sigma_0}}(M) \xrightarrow{\nabla_1} C_{\Omega^{\Sigma_0}}(M) ][-1]$$
	and
	$$C^\bullet_{\Omega^\Sigma,\varphi_L}(M) := \operatorname{cone} [C_{\Omega^{\Sigma}}(M^I) \xrightarrow{\varphi_L-1} C_{\Omega^{\Sigma}}(M^J)][-1].$$
\end{defn}

Formally we obtain the following analogues of the Frölicher spectral sequence in complex analysis:
\begin{rem}\label{rem:hodgederham}
	We have canonical convergent spectral sequences 
	$$E_{2}^{p,q} = H^p_{\nabla_1}(H^q_{\Omega^{\Sigma_0}}(M)) \implies H^{p+q}_{\Omega^{\Sigma}}(M)$$
	and
	$$E_{2}^{p,q} = H^p_{\varphi_L,\nabla_1}(H^q_{\Omega^{\Sigma_0}}(M)) \implies H^{p+q}_{\varphi_L,\Omega^{\Sigma}}(M).$$
\end{rem}
\begin{proof} 
	The first sequence is just the spectral sequence for the total complex of a double complex. 
	For the second spectral sequence an explicit computation shows that for $C^\bullet:= C^\bullet_{\Omega^{\Sigma_0}}(M)$ the Koszul complex\footnote{Formally, $K^\bullet(\varphi-1,C^\bullet)$ does not make sense, since $\varphi_L$ is not an endomorphism of $M^I$, we mean, of course, the mapping fibre of $\varphi-1 \colon C^\bullet_I \to C^\bullet_J,$ where $C_{I}^\bullet$ and $C_{J}^\bullet$ denote the respective resolutions for $M^I$ and $M^J.$ To keep the notation light we will suppress the intervals $I$ and $J$ in the notation. This poses no problem.} 
	$$K^\bullet(\nabla_1,\varphi-1,C^\bullet) (\simeq C^\bullet_{\Omega^\Sigma,\varphi}(M))$$ is isomorphic to the total complex of the double complex  
	\begin{equation} \label{eq:bigherr} C^\bullet \xrightarrow{\varphi-1,\nabla_1} C^\bullet \oplus C^\bullet \xrightarrow{\nabla_1 \oplus -(\varphi-1)} C^\bullet.	
	\end{equation}
	
	Indeed both total complexes are of the form $C^\bullet \oplus C^\bullet[-1] \oplus C^\bullet [-1] \oplus C^\bullet[-2]$ hence it remains to see that the differentials agree. 
	Let us denote by $D^\bullet$ the complex $K^\bullet(\nabla_1,C^\bullet)$ with differentials 
	$$\begin{pmatrix} d_C & 0 \\
		\nabla_1  & -d_C\end{pmatrix} .$$
	Hence $K^\bullet(\nabla_1,\varphi-1,C^\bullet)$ has differentials
	$$\begin{pmatrix} d_C & 0 & 0&0\\
		\nabla_1 & -d_C & 0&0\\
		\varphi_L-1  & 0 & -d_C & 0\\
		0& \varphi_L-1 & -\nabla_1 & d_C \end{pmatrix}, $$
	which agrees with the differential in the total complex of \eqref{eq:bigherr} (up to signs). 
	We obtain the second spectral sequence from the spectral sequence attached to \eqref{eq:bigherr}.
	
\end{proof}
\begin{thm} 
	Let $V\in \operatorname{Rep}_{L}(G_L)$ be $L$-analytic, let $I=[r,s]\subset (0,1]$ be a closed subinterval with $r$ large enough. Set $$N^I:= (\tilde{D}^I(V))^{la}:= ((\tilde{\fB}^I\otimes V)^{H_L})^{la}$$ and $M^I:= \operatorname{Sol}(N^I).$
	Then the natural map $M^I[0] \to C_{\Omega^{\Sigma_0}}^\bullet(N^I)$ is an equivalence. 
	In particular, the spectral sequences from Remark \ref{rem:hodgederham} collapse on the $E_2$-page and give $$H^i_{\Omega^{\Sigma}}(N^I) \cong H^i_{\nabla_1}(M^I)$$
	$$H^i_{\varphi_L,\Omega^{\Sigma}}(N^I) \cong H^i_{\varphi_L,\nabla_1}(M^I).$$
\end{thm}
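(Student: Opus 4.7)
The plan is to first establish the Dolbeaut quasi-isomorphism $M^I[0] \simeq C^\bullet_{\Omega^{\Sigma_0}}(N^I)$ and then derive the degeneration of the two spectral sequences of Remark \ref{rem:hodgederham} as a formal consequence.

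The first step is to apply Berger's monodromy theorem \ref{thm:monodromy}. Since $V$ is $L$-analytic, the $\CC_p$-semilinear representation $\CC_p \otimes_{L,\tau} V$ is trivial, hence in particular $\CC_p$-admissible, for every $\tau \in \Sigma_0$. Parts $(2)$ and $(3)$ of Theorem \ref{thm:monodromy} then imply that $N^I$ is a free $B^I$-module satisfying $\nabla_\tau N^I \subset t_\tau N^I$ for every $\tau \in \Sigma_0$. Part $(1)$ then provides an $(B^I)^{L\text{-an}}$-basis $e_1, \dots, e_n$ of $M^I = \operatorname{Sol}(N^I)$ which is at the same time a $B^I$-basis of $N^I$.

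Next I would observe that each operator $\partial_\sigma = (g_\sigma/t_\sigma)\nabla_\sigma$ for $\sigma \in \Sigma_0$ is a $B^I$-relative derivation on $N^I$: this follows because $\nabla_\sigma$ comes from the Lie algebra action and satisfies the Leibniz rule, and multiplication by $g_\sigma/t_\sigma$ is $B^I$-linear. Since $\partial_\sigma(e_i)=0$ by construction, on $N^I = \bigoplus_i B^I e_i$ each operator $\partial_\sigma$ acts coordinatewise. Consequently the underlying complex $K^\bullet(\partial_2,\dots,\partial_d, N^I)$ of $C^\bullet_{\Omega^{\Sigma_0}}(N^I)$ decomposes as a direct sum of $n$ copies of $K^\bullet(\partial_2,\dots,\partial_d, B^I)$, one for each basis vector. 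By the Dolbeaut Lemma \ref{lem:dolbeaut}, each summand is quasi-isomorphic to $(B^I)^{L\text{-an}} e_i$ placed in degree zero; taking the direct sum yields the desired equivalence $M^I[0] \simeq C^\bullet_{\Omega^{\Sigma_0}}(N^I)$.

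Plugging this into the first spectral sequence of Remark \ref{rem:hodgederham}, we have $H^q_{\Omega^{\Sigma_0}}(N^I) \cong M^I$ for $q=0$ and zero otherwise, so only the row $q=0$ survives, the sequence degenerates at $E_2$, and we obtain $H^i_{\Omega^\Sigma}(N^I) \cong H^i_{\nabla_1}(M^I)$. The same argument applies to the $\varphi_L$-variant, once we verify that $\varphi_L$ carries $M^I$ into $M^J$: by Lemma \ref{lem:dsigma}, $\partial_\sigma \varphi_L = [\pi_L]'^\sigma(y_\sigma)\varphi_L\partial_\sigma$, so $\partial_\sigma x =0$ forces $\partial_\sigma \varphi_L x=0$. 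I expect the main obstacle to be the first step: producing a basis of $M^I$ over $(B^I)^{L\text{-an}}$ that is simultaneously a $B^I$-basis of $N^I$. This is precisely the content of Berger's monodromy theorem, and once it is available, the remainder is essentially formal book-keeping with Koszul complexes and the spectral sequence of a double complex.
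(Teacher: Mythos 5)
Your proposal is correct and follows essentially the same route as the paper: invoke Berger's monodromy theorem (Theorem \ref{thm:monodromy}) to write $N^I \cong B^I \otimes_{(B^I)^{L\text{-an}}} M^I$, use the Leibniz rule to see that the $\partial_\sigma$, $\sigma\in\Sigma_0$, act coordinatewise so that $C^\bullet_{\Omega^{\Sigma_0}}(N^I)$ is a finite direct sum of copies of the resolution of the trivial module, and conclude by the Dolbeaut Lemma \ref{lem:dolbeaut}; the degeneration of both spectral sequences is then the formal consequence you describe. Your extra checks (the $\CC_p$-admissibility at $\tau\neq\id$ feeding into parts (2) and (3) of Theorem \ref{thm:monodromy}, and the $\varphi_L$-equivariance of $\operatorname{Sol}$ via Lemma \ref{lem:dsigma}) are details the paper leaves implicit, not a different argument.
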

\begin{proof}
	Since $V$ is $L$-analytic we can conclude from Theorem \ref{thm:monodromy}, that $$N = (\tilde{B}^I)^{la} \otimes_{(\tilde{B}^I_{L})^{L-la}} \operatorname{Sol}(M^I).$$ It follows from the Leibniz rule, that the operators $\partial_\sigma$ for $\sigma \in \Sigma_0$ act as $\partial_{\sigma} \otimes \id_{N}$ and hence the complex $C^\bullet_{\Omega^{\Sigma_0}}(N^I)$ is isomorphic to a power of the resolution of the trivial module. The claim follows from Lemma \ref{lem:dolbeaut}.
\end{proof}
This theorem provides us with a comparison isomorphism between $L$-Lie-algebra cohomology of $M^I$ and a twisted variant of $\Q_p$-Lie algebra cohomology (using the operators $\partial_i$ instead of $\nabla_i$ for $i>0$) of $N^I.$ We warn the reader that here $M^I$ is a module over the $L$-analytic vectors of $\tilde{\fB}_L^I$ rather than $\cR_L^I$ as considered in the preceding sections. By \cite[Theorem 4.4 (2)]{Berger2016} the former is explicitly given as $(\tilde{\fB}_L^I)^{la} = \bigcup_k \varphi_q^{-k}\mathbf{B}_L^{{I}^{1/q^k}},$ where $\mathbf{B}_L^?$ denotes the closure of (the image of) $L[T,T^{-1}]$ inside $\tilde{\fB}_L^?.$ The ring $\mathbf{B}_L^{{I}}$ is isomorphic to $\cR_L^{I'}$ for a suitable interval $I'$ (which can be precisely determined using the discussions after Lemma 3.4 \text{(loc.cit.)}).

\let\stdthebibliography\thebibliography
\let\stdendthebibliography\endthebibliography
\renewenvironment*{thebibliography}[1]{%
	\stdthebibliography{MSVW24}}
{\stdendthebibliography}

\bibliographystyle{abbrv}

\bibliography{Literatur}
\bigskip

\end{document}